\theoremstyle{plain}
\numberwithin{equation}{section}
\newcommand{\leaveout}[1]{}
\renewcommand{\p@enumii}{}
\newcommand{\cB}{{\mathcal B}}
\newcommand{\cL}{{\mathcal L}}
\newcommand{\cM}{{\mathcal M}}
\newcommand{\cN}{{\mathcal N}}
\newcommand{\cK}{{\mathcal K}}
\newcommand{\cU}{{\mathcal U}}
\newcommand{\cX}{{\mathcal X}}
\newcommand{\cY}{{\mathcal Y}}
\newcommand{\krein}{Kre\u\i n}
\newcommand{\ud}{\,{\mathrm d}}
\newcommand\C{{\mathbb C}}
\newcommand\D{{\mathbb D}}
\newcommand\E{{\mathbb E}}
\newcommand\Z{{\mathbb Z}}
\newcommand\zplus{{\Z^{+}}}
\newcommand\T{{\mathbb T}}
\newcommand\zero{\set{0}}
\newcommand{\Xscr}{\mathcal X}
\newcommand{\Ipdp}[2]{\left\langle #1 , #2 \right\rangle}
\newcommand{\set}[1]{\left\lbrace #1 \right\rbrace}
\newcommand{\bi}{\begin{itemize}}
\newcommand{\ei}{\end{itemize}}
\newcommand{\be}{\begin{enumerate}}
\newcommand{\ee}{\end{enumerate}}
\newcommand{\sbm}[1]{\left[\begin{smallmatrix}#1
\end{smallmatrix}\right]}
\newcommand{\bbm}[1]{\begin{bmatrix}#1\end{bmatrix}}
\newcommand{\wtil}[1]{{\widetilde{#1}}}
\newcommand{\mat}[1]{\ensuremath{\begin{bmatrix} #1 \end{bmatrix}}}
\newcommand{\bx}{{\bf x}} 
\newcommand{\by}{{\bf y}} 
\newtheorem{lemma}{Lemma}[section]
\newtheorem{theorem}[lemma]{Theorem}
\theoremstyle{definition}
\begin{document}

\title[A new systems theory perspective on canonical Wiener-Hopf factorization]{A new systems theory perspective on canonical Wiener-Hopf factorization on the unit circle}

\author[S. ter Horst]{S. ter Horst}
\address{Department of Mathematics, Research Focus Area:\ Pure and Applied Analytics, North-West University, Potchefstroom, 2531 South Africa and DSI-NRF Centre of Excellence in Mathematical and Statistical Sciences (CoE-MaSS)}
\email{Sanne.TerHorst@nwu.ac.za}

\author[M. Kurula]{M. Kurula}
\address{{\AA}bo Akademi Mathematics, Henriksgatan 2, 20500 {\AA}bo, Finland}
\email{Mikael.Kurula@abo.fi}

\author[A.C.M. Ran]{A.C.M. Ran}
\address{Department of Mathematics, Faculty of Science, VU Amsterdam, De Boelelaan 1111, 1081 HV Amsterdam, The Netherlands and Research Focus: Pure and Applied Analytics, North-West University, Potchefstroom, South Africa}
\email{A.C.M.Ran@vu.nl}

\thanks{This work is based on the research supported in part by the National Research Foundation of South Africa (Grant Numbers 118513, 127364 and 145688), the DSI-NRF Centre of Excellence in Mathematical and Statistical Sciences (Grant Number 2024-030-OPA) and the Magnus Ehrnrooth Foundation.}

\subjclass{Primary 47A68; Secondary 47A63, 47A48, 47A56, 93B28, 93C05, 93D25}
\keywords{Canonical Wiener-Hopf factorization, dichotomous systems, KYP-inequality, \krein\ space}


\begin{abstract}
We establish left and right canonical factorizations of Hilbert-space operator-valued functions $G(z)$ that are analytic on neighborhoods of the complex unit circle $\T$ and the origin $0$ and that have the form $G(z)=I+F(z)$ with $F(z)$ taking strictly contractive values on $\T$. Such functions can be realized as transfer functions of infinite dimensional dichotomous discrete-time linear systems, and we employ the strict bounded real lemma for this class of operators, together with associated \krein\ space theory, to derive explicit formulas for the left and right canonical factorizations.  
\end{abstract}

\maketitle

{\it Dedicated to the memory of Heinz Langer, in admiration of his contributions to operator theory.}



\section{Introduction}\label{S:Intro}

Let $G(z)$ be an operator-valued function which takes bounded and invertible values on a closed rectifiable contour $\Gamma$, and assume that $G(z)$ is analytic on the contour; throughout this paper, ``operator'' will always mean bounded linear Hilbert space operator and when an operator is said to be invertible this implies that the inverse is bounded. We say that $G(z)$ admits a \emph{right canonical Wiener-Hopf factorization} on the contour 
when $G(z)= V_-(z)V_+(z)$, where $V_+(z)$ and $V_+(z)^{-1}$ extend analytically on the domain inside $\Gamma$, while $V_-(z)$ and $V_-(z)^{-1}$ extend analytically on the domain outside $\Gamma$. Likewise, we say that $G(z)$ admits \emph{left canonical Wiener-Hopf factorization} on the contour when $G(z)=W_+(z)W_-(z)$, where $W_+(z)$ and $W_-(z)$ have the same properties as $V_+(z)$ and $V_-(z)$, respectively. Such factorizations play an important role in establishing invertibility of Toeplitz, singular integral and Wiener-Hopf integral operators, and several other applications; See, e.g., \cite{BGKR08, BGKR10, BS, CGOT3, FKR10, GF, GGKbook90, GGKOT63}. An extensive review of the literature and an overview of results concerning these factorizations and their applications can be found in \cite{GKS}. 

In the seventies a series of papers by Gohberg and Leiterer appeared around the topic of canonical factorization. Most of these papers are in Russian; in English, most of the results contained in them can be found in \cite{GLOT192}. One of the main results is the following: Let $G(z)=I+F(z)$ be a Hilbert space operator-valued function that is analytic on a neighborhood of the unit circle $\T$, with $I$ indicating the identity operator and with $\max_{z\in\T} \|F(z)\| < 1$; since $F$ is analytic on a neighborhood of $\T$, the function $\|F(z)\|$ is continuous on the compact set $\T$ and hence the maximum indeed exists.   Then $G(z)$ admits both left and right canonical Wiener-Hopf factorization (cf., Theorem 8.5.3 and Corollary 8.5.4 in \cite{GLOT192}). A similar result holds for operator-valued functions with positive real part on the unit circle. In fact, these results hold true on any circular contour in the extended complex plane, that is, on circles and lines (which are interpreted as circles through infinity in the extended complex plane). What makes the result on the strictly contractive case so fascinating is that, in fact, this property characterizes the circular contours, in the following sense: If on a given Jordan curve $\gamma$, any rational matrix function $G(z)=I+F(z)$, with $\|F\|_{\infty,\gamma}<1$ on $\gamma$, allows both left and right canonical Wiener-Hopf factorization, then it was shown in \cite{MaMa76, ViMa73} that this curve must be a circle or a line; for this it in fact suffices to have both left and right canonical Wiener-Hopf factorization for $2 \times 2$ rational matrix functions.

Also in the seventies, the concept of \emph{realization} of a rational matrix function was developed in the control engineering literature. This concept was applied to factorization of matrix and operator valued functions in \cite{BGKOT1}, which later on was revised, expanded and updated in \cite{BGKR08, BGKR10}. In particular, in \cite{BGKR10} the focus is on canonical factorization of rational matrix valued functions and applications as diverse as the transport equation, optimal control and $H^\infty$-control. In 
Chapter 16 the bounded real lemma is discussed, see also, e.g. \cite{LaRoBook}. The bounded real lemma states that a rational matrix function in realized form takes contractive values on the unit circle or on the imaginary line if and only if a certain associated algebraic Riccati equation has a stabilizing solution. 
Using the solution to the algebraic Riccati equation, for a strictly proper rational matrix function $F(z)$ which has contractive values on the imaginary line, it is then shown that $G(z)=I+F(z)$ admits canonical Wiener-Hopf factorizations, both left and right. These results are based on \cite{GoRa93}. The analogue for rational matrix functions with a positive real part on the imaginary line was treated in \cite{RaRo92}, in the same manner.

The proof of the result of Gohberg and Leiterer (see \cite{GLOT192}) is based on fairly deep analytical tools. In contrast, the approach via realization theory using the bounded real lemma may be viewed as falling out of a combination of the main factorization approach of \cite{BGKOT1} with some theory of indefinite inner product spaces (see \cite{AzIobook,GheoBook22}). This is the approach taken in \cite{GoRa93}. In the present paper we shall return to the theorem of Gohberg and Leiterer, in an infinite dimensional Hilbert space setting, but starting from a realization approach. Instead of the Riccati equation from the bounded real lemma, we use the Kalman-Yakubovich-Popov inequality (abbreviated to KYP-inequality). This inequality and its use for properties of the transfer function and the corresponding system was discussed in great detail in the infinite dimensional case in a series of papers by several groups of authors, e.g. \cite{AKP06, AKP16} and \cite{BGtH18a, BGtH18b, BGtH18c} in discrete time, and for the continuous-time case in \cite{ArStKYP,BthK22}. Our starting point will be the transfer function of a dichotomous system as in \cite{BGtH18c}. This will enable us to use methods and results from the theory of \krein\ spaces, which can be found in \cite{AzIobook, GheoBook22, IKLbook}.
The theory of \krein\ spaces has been applied to many factorization problems for matrix and operator functions that have special properties. For rational matrix valued functions see, e.g., \cite{BGKR10}. Perhaps the first application of methods of indefinite inner product spaces to factorization problems for operator polynomials are to be found in work of M.G. \krein\ and H. Langer, see \cite{KL64, KL65}.

We observe that the class of functions considered in \cite{GLOT192} is slightly larger than the class of functions we consider. It turns out that the transfer function of a dichotomous system is a function which is analytic around the unit circle as well as at the origin, whose values are bounded linear operators between Hilbert spaces. In contrast, \cite{GLOT192} considers functions which may not be analytic at the origin. The methods we apply enable us to provide explicit formulas for the factors, in contrast to \cite{GLOT192}. 

In \cite{BGtH18c}, basic theory of Hilbert-space linear discrete time invariant \emph{dichotomous systems} of the form
\begin{equation}\label{eq:dichotomous}
	\left\{
		\begin{aligned}
			x(n+1) &= A x(n)+ Bu(n) \\	
			y(n) &= Cx(n)+Du(n),\quad n\in\zplus,
			\quad x(0)=x_0~\text{given},
		\end{aligned}
	\right.
\end{equation}
is discussed; here the word \emph{dichotomous} refers to the main operator $A$, more specifically to the fact that $A$ has no spectrum on the complex unit circle $\T$. The \emph{transfer function} of \eqref{eq:dichotomous} is
$$
	F(z)=D+zC(I-z A)^{-1} B,
$$
defined for all $z\in\C$, such that $I-zA$ is invertible: $z\in\rho(A)^{-1}\cup \{0\}$. We will, in particular, be interested in the case where
$$
	\|F\|_{\infty,\T}:=\max_{z\in\T} \|F(z)\|<1.
$$
Our main result, for which we shall present much more detailed versions in Section 3, is the following.

\begin{theorem}[main theorem, simplified]
Assume that the Hilbert space operator valued function $G(z)$ is of the following type, with $A$ dichotomous and $I+D$ invertible:
\begin{equation*}
	 G(z)=I+F(z),\qquad
	F(z)=D+zC(I-z A)^{-1} B,\qquad
	\|F\|_{\infty,\T}<1.
\end{equation*}
Then $G(z)$ has a \emph{right canonical Wiener-Hopf factorization} $G(z)=V_-(z)\,V_+(z)$, i.e.,  $V_+(z)$ and $V_+(z)^{-1}$ are analytic on a neighborhood of the closed unit disc $\overline \D$, while $V_-(z)$ and $V_-(z)^{-1}$ are analytic on a neighborhood of the closed complement $\overline \E$ of $\D$. 
There is also a \emph{left} canonical Wiener-Hopf factorization $G(z)=W_+(z)\,W_-(z)$, where $W_\pm(z)$ have the same properties as $V_\pm(z)$.
\end{theorem}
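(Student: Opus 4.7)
The plan is to transport the Bart--Gohberg--Kaashoek state-space approach to canonical factorization to the present infinite-dimensional dichotomous setting, using the strict discrete-time KYP-inequality and \krein-space theory in place of the algebraic Riccati equation employed in \cite{GoRa93,BGKR10}. Writing
$$G(z) = (I+D) + zC(I - zA)^{-1}B,$$
the key companion operator is the \emph{associate main operator} $A^\times := A - B(I+D)^{-1}C$, which is the main operator of a realization of $G(z)^{-1}$. The factorization result reduces to producing direct-sum decompositions of the state space $X$ into matching spectral subspaces of $A$ and $A^\times$.

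The first step is to invoke the strict bounded real lemma for dichotomous discrete-time systems, in the spirit of \cite{BGtH18a,BGtH18b,BGtH18c}, applied to the hypothesis $\|F\|_{\infty,\T}<1$; this produces a bounded selfadjoint invertible $H$ on $X$ satisfying a strict KYP-type inequality. The operator $H$ turns $X$ into a \krein\ space via $[x,y] := \ipd{Hx}{y}$. The strict KYP-inequality implies that not only $A$ but also $A^\times$ is dichotomous, so both admit Riesz spectral projections onto their stable and unstable parts, yielding decompositions
$$X = \Mscr_+(A) \dotplus \Mscr_-(A), \qquad X = \Mscr_+(A^\times) \dotplus \Mscr_-(A^\times),$$
where the plus subspaces correspond to spectrum in $\D$ and the minus subspaces to spectrum in $\E$.

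The heart of the argument is the \emph{matching} step: using the \krein-space structure induced by $H$, one should show
$$X = \Mscr_+(A) \dotplus \Mscr_-(A^\times), \qquad X = \Mscr_+(A^\times) \dotplus \Mscr_-(A),$$
both as topological direct sums. The strict KYP-inequality should force $\Mscr_+(A)$ and $\Mscr_-(A)$ to be uniformly definite with respect to $[\cdot,\cdot]$ and of opposite signature, and similarly for $\Mscr_\pm(A^\times)$, so that the matching follows from standard \krein-space results on complementation of uniformly definite subspaces (cf.\ \cite{AzIobook,GheoBook22}). Once matched decompositions are in hand, the classical realization formulas from the Bart--Gohberg--Kaashoek matching theorem (cf.\ \cite{BGKOT1,BGKR08,BGKR10}) yield explicit factors $V_\pm(z)$ and $W_\pm(z)$ written in terms of the spectral projections of $A$ and $A^\times$ together with the data $B$, $C$, $(I+D)^{\pm 1}$; the analyticity and invertibility of $V_\pm$, $W_\pm$ on neighborhoods of $\overline{\D}$ and $\overline{\E}$ then follow immediately from the spectral locations of the restrictions $A|_{\Mscr_\pm(A)}$ and $A^\times|_{\Mscr_\pm(A^\times)}$.

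The principal obstacle is the matching step. In finite dimensions it reduces to a dimension count, but in infinite dimensions one must genuinely use the uniform definiteness coming from the \emph{strict} (as opposed to non-strict) KYP inequality to rule out any tilt that would prevent the spectral subspaces of $A$ and $A^\times$ from forming a topological direct sum, and to secure bounded oblique projections along these subspaces. Carrying out this analysis carefully within the \krein-space framework, while tracking the realization data $(A,B,C,D)$ precisely enough to read off the explicit formulas promised in Section~3, is where the main technical work lies.
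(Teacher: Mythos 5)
Your roadmap is the same as the paper's (strict bounded real lemma producing an invertible selfadjoint $H$, the \krein\ space on $\cX$ with Gram operator $H$, matching of the dichotomous pairs of $A$ and $A^\times$, then the Bart--Gohberg--Kaashoek factorization formulas and spectral bookkeeping), but the two places where you write ``should'' are exactly where the content of the paper lies, and as written they are genuine gaps. First, dichotomy of $A^\times$ does not follow from the strict KYP inequality \eqref{eq:KYPdichotomous} for $\Sigma$ alone. Multiplying \eqref{eq:KYPdichotomous} on both sides by $\sbm{I\\-(I+D)^{-1}C}$ gives only $(A^\times)^*HA^\times\prec H$, i.e.\ uniform \emph{contractivity} of $A^\times$ in the \krein\ space, and a uniform \krein-space contraction can perfectly well have spectrum on $\T$ (for instance $T=\sbm{0&0\\0&aS}$ with $a>1$ and $S$ a unilateral shift on the negative component of a fundamental decomposition satisfies $T^*JT\prec J$ yet $\sigma(T)\supset\T$). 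What is needed is uniform \emph{bi}contractivity, i.e.\ that also the \krein\ adjoint $(A^\times)^{[*]}=H^{-1}(A^\times)^*H$ is a uniform contraction; this requires knowing that $H^{-1}$ solves the strict KYP inequality for the adjoint system $\Sigma^*$, which is Theorem \ref{T:BRL*} of the paper and is itself nontrivial (it is proved via plus-operator arguments and maximality of the image of the negative subspace, using invertibility of $A_-$). Moreover, the uniform definiteness of the invariant subspaces that your matching argument presupposes is not a consequence of dichotomy plus the KYP inequality ``in the spirit of'' the finite-dimensional case: it comes from the invariant-subspace theorem for uniform bicontractions (\cite[Thm 3.2.1]{AzIobook}, used in Lemma \ref{lem:bicontractdichotAlt}), so the missing bicontractivity underlies both halves of your sketch.

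Second, the matching step $\cX=\Mscr_+(A)\dotplus\Mscr_-(A^\times)$ (in the paper's notation $\cX=\cX_+\dotplus\cX_-^\times$) is left as a statement of intent. The idea is viable: once $\cX_+$ is maximal uniformly positive and $\cX_-^\times$ maximal uniformly negative, their angular operators with respect to a fundamental decomposition have norm strictly less than one, and one can conclude that the sum is all of $\cX$, direct and topological. But this is precisely what has to be proved, and the paper carries it out in detail: density of $\cX_++\cX_-^\times$ via the orthogonal companions $(\cX_+)^{[\perp]}$ and $(\cX_-^\times)^{[\perp]}$ being anti-Hilbert/Hilbert subspaces, and directness and closedness via a Banach-limit extension $\wtil{\cX}$ of the state space together with Lemmas 5.1 and 5.2 of \cite{Ran82}, using that the extended angular operators remain strict contractions. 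A smaller inaccuracy: the analyticity of the factors is not ``immediate from the spectral locations of the restrictions $A|_{\Mscr_\pm(A)}$'', because in the mixed decomposition $\cX=\cX_-\dotplus\cX_+^\times$ the diagonal block of $A$ acting in $\cX_+^\times$ is not a restriction of $A$ to one of its own spectral subspaces; the paper locates its spectrum by a similarity argument (Lemma 5.9 of \cite{BGKR08}). In short: correct skeleton, identical to the paper's, but the proofs of the two decisive lemmas (the adjoint KYP inequality giving bicontractivity, and the matching of the dichotomous pairs) are missing rather than routine.
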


The paper is organized as follows. In Section \ref{sec:BRL-Krein}, we discuss the class of infinite dimensional, discrete-time dichotomous systems, focusing on the case where the transfer function has strictly contractive values on the unit circle. The strict KYP inequality provides us with an invertible selfadjoint operator $H$, which we use as the Gram operator for an indefinite inner product on the state space. That turns the state space into a \krein\ space, and it turns out that in our case, the main operator $A$ is a bicontraction in this \krein\ space. Well-known consequences of this fact (e.g., \cite{AzIobook}, Theorem 3.2.1) are discussed concerning the existence of invariant maximal semidefinite subspaces, which are in fact uniformly definite.

In Section 3 the main results are presented in more detail. The proof makes use of the factorization approach of \cite{BGKOT1}. In particular, a large role is played by the main operator $A^\times$ of the inverse system. It is shown that this operator too is a bicontraction in the \krein\ space. Consequently, it is dichotomous as well. A result on matching of invariant subspaces of $A$ and $A^\times$ is then proved, and that is used to prove the main results.

The following notation will be used throughout the paper. We denote by $\cB(\cX,\cY)$ (by $\cB(\cX)$) the sets of bounded linear operators from the Banach space $\cX$ to the Banach space $\cY$ (linear operators on $\cX$). For an operator $A$, we denote by $\sigma(A)$ the spectrum of $A$, and by $\rho(A)$ the resolvent set of $A$. The symbols $\prec$ and $\succ$ will be used to partially order selfadjoint operators on a Hilbert space $\cX$; more precisely, $H\succeq 0$ or $0\preceq H$ means that $H$ is positive semidefinite, while we write $H\succ 0$ or $0\prec H$ when $H\in\cB(\cX)$ is uniformly positive, that is, for some $\eta>0$, we have 
$$
\langle Hx,x \rangle \geq \eta\, \|x\|^2,\quad x\in\cX .
$$
By writing $H\preceq G$ or $H \prec G$, we mean that $G-H \succeq 0$ or $G-H \succ 0$, respectively. Otherwise, our operator theory notation and terminology follows the standard conventions, cf., \cite{BGKOT1,GGKbook90,GGKOT63}.

Let $\cK$ be a \krein\ space with indefinite inner product $[\cdot ,\cdot ]$. Then $\cK$ can be decomposed as $\cK=\cK_-[\dotplus]\cK_+$, where $\cK_+$ is a Hilbert space and $\cK_-$ is an anti-Hilbert space in the \krein\ space inner product $[\cdot,\cdot]$; here $[\dotplus]$ denotes a direct sum that is orthogonal with respect to that indefinite inner product. Such decompositions are called fundamental decompositions, and the norms associated to different fundamental decompositions via
$$
	\|x_+ + x_-\|_\cK^2:=\langle x_+,x_+\rangle_{\cK_+}-\langle x_-,x_-\rangle_{\cK_-},\qquad
	x_\pm\in\cK_\pm,
$$
are all equivalent, and they turn $\cK$ into a Hilbert space.

For an operator $A$ on $\cK$, the \krein\ space adjoint of $A$ is denoted by $A^{[*]}$. For a subspace $\cM$ of $\cK$, the space $\cM^{[\perp]}$, called the orthogonal companion of $\cM$ in $\cK$, is the set of all vectors $y\in\cK$ such that $[x,y]=0$ for all $x\in\cM$. A subspace $\cM$ of $\cK$ is called positive semidefinite (or negative semidefinite) when $[x,x]\geq 0$ (or $[x,x]\leq 0$) for all $x\in\cM$. The subspace is said to be maximal  semidefinite when it is semidefinite and it is not contained in a strictly larger semidefinite subspace. The subspace $\cM$ is uniformly positive when $[x,x]\geq \delta \|x\|^2$ for some $\delta>0$. The subspace $\cM$ is uniformly negative if $-\cM$, the same space but with a change of sign in the inner product, is uniformly positive. A closed uniformly positive (closed uniformly negative) subspace is a Hilbert space (an anti-Hilbert space).

Every invertible selfadjoint operator $H$ on a Hilbert space $\cX$ induces a \krein\ space structure on $\cX$ via the indefinite inner product $[x,y]=\langle Hx,y \rangle,$ $x,y\in\cX$. Then $H$ is called the Gram operator of the induced \krein\ space, and for an operator $S\in\cB(\cX)$, the adjoint in the indefinite inner product becomes $S^{[*]}=H^{-1}S^*H$.


\section{The strict bounded real lemma for dichotomous systems and associated \krein\ spaces}\label{sec:BRL-Krein}

In this section, we recall the strict bounded real lemma for dichotomous systems from \cite{BGtH18c}, and we present some new results in which we associate \krein\ spaces with the selfadjoint operators that appear from the bounded real lemma. First we give a short review of infinite dimensional dichotomous systems, based on Section 2 of \cite{BGtH18c}.   

\subsection{Dichotomous systems}

Consider a linear discrete-time input-state-output system $\Sigma$ over the nonnegative integers $\Z_+$ given by 
\begin{equation}\label{LinSys}
\Sigma:\quad\left\{
\begin{aligned}
x(n+1) &= A x(n)+ Bu(n),\quad  x(0)=x_0,\\	
y(n) &= Cx(n)+Du(n),\quad n\in\zplus.
\end{aligned}
\right.
\end{equation}
Here the input sequence $(u(n))_{n\in\Z_+}$, state sequence $(x(n))_{n\in\Z_+}$ and output sequence $(y(n))_{n\in\Z_+}$ take values in Hilbert spaces $\cU$, $\cX$ and $\cY$, called the input, state and output spaces, respectively, $x_0\in\cX$ is the given initial state at time $n=0$, and $A\in\cB(\cX)$, $B\in\cB(\cU,\cX)$, $C\in\cB(\cX,\cY)$, $D\in\cB(\cU,\cY)$ are the main, control, observation and feedthrough operators, respectively. A triple $(u(n),x(n),y(n))_{n\in\Z_+}$ is called a \emph{trajectory} of \eqref{LinSys}, if $u(n)\in\cU$, $x(n)\in\cX$, $y(n)\in\cY$ for each $n\in\Z_+$, and \eqref{LinSys} is satisfied. Sometimes we identify the system $\Sigma$ with the {\em system colligation} given by the block operator matrix $\sbm{A&B\\C&D}$, also called the {\em system matrix} of $\Sigma$. The transfer function of $\Sigma$ is given by 
\begin{equation}\label{Transfer}
F_\Sigma(z)=D+zC(I-zA)^{-1}B \mbox{ for $z\in \rho(A)^{-1}\cup\{0\}$.}  
\end{equation}
It should be noted that although the transfer function is formally defined only for $z\in  \rho(A)^{-1}\cup\{0\}$, it may happen that it has an analytic continuation to a larger domain. We shall use for this analytic continuation, if it exists, the same notation $F_\Sigma(z)$. The adjoint system $\Sigma^*$ of $\Sigma$ is the linear system with system matrix $\sbm{A&B\\C&D}^*=\sbm{A^*&C^*\\B^*&D^*}$. 

The system $\Sigma$ is called {\em dichotomous} if the main operator $A$ is {\em dichotomous}, meaning that its spectrum $\sigma(A)$ is disjoint from the complex unit circle $\T$, or, equivalently, there is a direct sum decomposition $\cX=\cX_- \dotplus \cX_+$ of the state space such that
\begin{equation}\label{eq:dichotdecomp}
	A=\bbm{A_-&0\\0&A_+}: \cX_-\dotplus\cX_+\to\cX_-\dotplus\cX_+,
\end{equation}
with $A_-=A|_{\cX_-}\in\cB(\cX_-)$ with $\sigma(A_-)\subset\E:=\C\setminus\overline\D$ and $A_+=A|_{\cX_+}\in\cB(\cX_+)$ with $\sigma(A_+)\subset\D$. In particular, $A_-$ is invertible, and $A_-^{-1}$ and $A_+$ are stable. Observe that these subspaces are uniquely determined by $A$; in fact $\cX_-$ and $\cX_+$ are the spectral subspaces of $A$ corresponding to $\E$ and $\D$, respectively. We then call $(\Xscr_-,\Xscr_+)$ the \emph{dichotomous pair (of subspaces)} of $A$. More precisely, let 
$$
	P_+:=\frac{1}{2\pi i}\int_\mathbb{T} (zI-A)^{-1} \ud z
$$ 
be the Cauchy integral of the resolvent of $A$ over the unit circle (counterclockwise); then $P_+$ is the so-called \emph{spectral projection} of $A$ corresponding to the unit disc. Its image ${\rm Im\, }P_+$ is the subspace $\cX_+$ and its kernel ${\rm Ker\,}P_+$ is the subspace $\cX_-$.

\subsection{Realization}\label{realization}

If $G(z)$ is the transfer function of a dichotomous system, then it is clear that $G(z)$ is analytic on a neighborhood of the unit circle $\T$, as well as on a neighborhood of zero. 
In this subsection we show that the converse also holds.
 Let $G(z)$ be a $\cB(\cU,\cY)$-valued function, for Hilbert spaces $\cU$ and $\cY$, which is analytic on a neighborhood of the unit circle $\T$ as well as on a neighborhood of $0$. We claim that $G(z)$ has a realization of the form
\begin{equation}\label{Greal}
G(z)=D+zC(I-zA)^{-1} B,
\end{equation}
where $A\in\cB(\cX)$ is dichotomous, $B\in\cB(\cU,\cX), C\in\cB(\cX,\cY)$ and $D\in\cB(\cU,\cY)$. To see this, we shall use results from \cite{BGKR08}, in particular, from Section 4.2. First we observe that $G(z)$ can be written as $G(z)=G_i(z)+G_o(z)$, where $G_i(z)$ is analytic inside and on the unit circle, and $G_o(z)$ is analytic outside and on the unit circle with value zero at infinity. 

Let $0<r_i<1<r_o<\infty$ be such, that the annulus $r_o \overline{\D} \cap r_i \overline{\E}$ around $\T$ is contained in the domain of analyticity of $G(z)$. Then $G_i(z)$ is analytic on a neighborhood of $r_o\overline{\D}$ and $G_o(z)$ is analytic on a neighborhood of $r_i\overline{\E}$. In order to determine a realization for $G_i(z)$ we follow the construction in Section 4.2, specifically in the proof of Theorem 4.3 in \cite{BGKR08}, but now carried out on the Hilbert space $L^2 (r_o \T, \cY)$, the space of square integrable measurable functions on the contour $r_o\T$ with values in $\cY$, rather than the space $C(r_o \T,\cY)$ of continuous functions on $r_o\T$ with values in $\cY$. This leads to a realization of $G_i(z)$ of the form
\begin{equation}\label{eq:GiReal}
	G_i(z)=\delta_i+ \gamma_i(z I -\alpha_i)^{-1}\beta_i,
\end{equation}
with $\delta_i\in\cB(\cU,\cY)$, $\gamma_i\in\cB(\cX_i,\cY)$, $\alpha_i\in\cB(\cX_i)$, $\beta_i\in\cB(\cU,\cX_i)$, where $\cX_i=L^2 (r_o \T, \cY)$, and $\sigma(\alpha_i)=r_0\T$, so that
 $r_o\D \subset \rho(\alpha_i)$. In particular, $\alpha_i$ is invertible and we can rewrite \eqref{eq:GiReal} as 
\begin{align*}
G_i(z)&=\delta_i-\gamma_i(I-z \alpha_i^{-1})^{-1}\alpha_i^{-1}\beta_i\\ 
&= \delta_i - \gamma_i\alpha_i^{-1}\beta_i - z\gamma_i\alpha_i^{-1}(I-z \alpha_i^{-1})^{-1}\alpha_i^{-1}\beta_i.
\end{align*}

For $G_o(z)$ we consider the function $\widehat{G}_o(z):= G_o(1/z)$ which is analytic on a neighborhood of $r_i^{-1}\overline{\D}$ and apply in the same way the techniques from the proof of \cite[Theorem 4.3]{BGKR08}, modified as above, leading to a realization of $\widehat{G}_o(z)$ of the form 
\[
\widehat{G}_o(z)=\delta_o+ \gamma_o(z I -\alpha_o)^{-1}\beta_o
\]
with $\delta_o\in\cB(\cU,\cY)$, $\gamma_o\in\cB(\cX_o,\cY)$, $\alpha_o\in\cB(\cX_o)$, $\beta_o\in\cB(\cU,\cX_o)$, where $\cX_o=L^2 (r_i^{-1} \T, \cY)$, and $\sigma(\alpha_o)=r_i^{-1}\T$, so that
$r_i^{-1}\D \subset \rho(\alpha_o)$. For $G_o(z)$ We then have
\[
G_o(z)=\delta_o+ \gamma_o(z^{-1} I -\alpha_o)^{-1}\beta_o=\delta_o+ z\gamma_o(I -z\alpha_o)^{-1}\beta_o.
\]
Combining the two realizations for $G_i(z)$ and $G_o(z)$ above, we see that $G(z)$ admits a realization of the form \eqref{Greal} with 
\[
D=\delta_o+\delta_i-\gamma_i\alpha_i^{-1}\beta_i,\ \ C=\mat{-\gamma_i\alpha_i^{-1} & \gamma_o},\ \ A=\mat{\alpha_i^{-1}&0\\0&\alpha_o},\ \, B=\mat{\alpha_i^{-1}\beta_i\\ \beta_o}.
\]

Since $\sigma(\alpha_i)=r_o\T$, it follows that $\sigma(\alpha_i^{-1})=r_o^{-1}\T$, and in particular $\alpha_i^{-1}$ is stable. Then $\sigma(A)=r_o^{-1}\T\cup r_i^{-1}\T$, which does not intersect $\T$; thus $A$ is dichotomous.

\subsection{The strict Bounded Real Lemma}

We shall be interested in the situation where the supremum norm of the transfer function $F_\Sigma$ on $\T$ is strictly less than one, that is, $\|F_\Sigma\|_{\infty,\T}:=\sup_{z\in\T} \|F_\Sigma(z)\|<1$. Note that the supremum is actually a maximum by continuity of $F_\Sigma$ and compactness of $\T$. The strict Bounded Real Lemma provides a necessary and sufficient criterium for this to happen, in terms of an operator matrix inequality of Kalman-Yakubovich–Popov (KYP) type; the next result is Theorem 7.1 from \cite{BGtH18c}.

\pagebreak

\begin{theorem}\label{T:dichotSBRL}
For a dichotomous system \eqref{LinSys}, the transfer function \eqref{Transfer} satisfies $\|F\|_{\infty,\T}<1$ if and only if there exists an invertible, selfadjoint operator $H$ in $\cB(\cX)$ which solves the \emph{strict KYP inequality} for $\Sigma$, i.e., 
\begin{equation}\label{eq:KYPdichotomous}
	\bbm{A&B\\C&D}^*\bbm{H&0\\0&I_\cY}\bbm{A&B\\C&D} \prec \bbm{H&0\\0&I_\cU}.
\end{equation}
In this case, the dimensions of the spectral subspaces of $A$ over $\D$ and $\E$ correspond to the dimensions of the spectral subspaces of $H$ over $(0,\infty)$ and $(-\infty,0)$, respectively, in the sense that they are isomorphic as subspaces of $\cX$.
\end{theorem}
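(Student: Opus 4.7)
The plan is to prove the two implications separately, with the necessity direction being the substantive part.

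\textbf{Sufficiency.} Assuming an invertible selfadjoint $H\in\cB(\cX)$ satisfies \eqref{eq:KYPdichotomous}, I would first extract $\varepsilon > 0$ with
\[
\bbm{A&B\\C&D}^* \bbm{H&0\\0&I_\cY}\bbm{A&B\\C&D} \preceq \bbm{H&0\\0&I_\cU} - \varepsilon I,
\]
and then evaluate this on eigenmode pairs. For $\lambda\in\T$, which lies in $\rho(A)$ by dichotomy, set $\xi_0 := (\lambda I - A)^{-1}Bu_0$ for $u_0\in\cU$; then $A\xi_0 + Bu_0 = \lambda \xi_0$ and $C\xi_0 + Du_0 = F(1/\lambda)u_0$. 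Substituting $\sbm{\xi_0\\u_0}$ and using $|\lambda|^2 = 1$, the $H$-terms cancel and one reads off $\|F(1/\lambda)u_0\|^2 \le (1-\varepsilon)\|u_0\|^2$, yielding $\|F\|_{\infty,\T}^2 \le 1-\varepsilon < 1$ after taking suprema over $u_0$ and $\lambda$.

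\textbf{Necessity.} I would exploit the dichotomous splitting $A = \sbm{A_-&0\\0&A_+}$ on $\cX_-\dotplus\cX_+$ with $A_+$ and $A_-^{-1}$ stable, and partition $B = \sbm{B_-\\B_+}$ and $C = \sbm{C_-&C_+}$ conformally. The aim is to construct a block-diagonal $H = \sbm{H_-&0\\0&H_+}$ with $H_+\succ 0$ and $H_-\prec 0$ solving \eqref{eq:KYPdichotomous}. From $\|F\|_{\infty,\T}<1$ one has $I-F(z)^*F(z)\succ\delta I$ uniformly on $\T$, so the Popov function admits a canonical Wiener-Hopf (spectral) factorization on $\T$. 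I would realize the factors in state-space form: a discrete-time Stein equation on $\cX_+$ (driven by the stable $A_+$) produces $H_+\succ 0$, while the dual Stein equation on $\cX_-$ (driven by the stable $A_-^{-1}$ after the inversion $z\mapsto 1/z$) produces $H_-\prec 0$. Equivalently, one may invoke the stable infinite-dimensional discrete-time strict BRL (as in \cite{AKP16,BGtH18b}) separately on the $\cX_+$-subsystem and on the time-reversed $\cX_-$-subsystem, then paste the two Gramians. Since $H_\pm$ are uniformly $\pm$-definite on $\cX_\pm$, the spectral subspaces of the resulting $H$ over $(0,\infty)$ and $(-\infty,0)$ coincide with $\cX_+$ and $\cX_-$, i.e., with the spectral subspaces of $A$ over $\D$ and $\E$.

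\textbf{Main obstacle.} The hard part is that $\|F\|_{\infty,\T}<1$ bounds the \emph{full} transfer function and not the stable and antistable summands separately; the coupling between $\cX_+$ and $\cX_-$ enters through the off-diagonal realization data and through the feedthrough $D$, so one cannot simply treat the two halves in parallel. Making the canonical factorization of $I-F^*F$ rigorous in the operator-valued, infinite-dimensional setting—existence of the spectral factor, realizability of it on an appropriate state space, and verifying that the candidate Gramian is genuinely \emph{invertible} (not just non-negative semidefinite)—is the technical heart of the argument and is where the dichotomous-systems machinery of \cite{BGtH18c} does its work. Once the construction is in hand, the definiteness of $H_\pm$ on $\cX_\pm$ immediately gives the spectral subspace correspondence asserted in the theorem.
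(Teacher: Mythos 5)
Your ``sufficiency'' half (an invertible selfadjoint solution $H$ of \eqref{eq:KYPdichotomous} forces $\|F\|_{\infty,\T}<1$) is correct and complete: with $\lambda\in\T\subset\rho(A)$ and $\xi_0=(\lambda I-A)^{-1}Bu_0$ one indeed has $A\xi_0+Bu_0=\lambda\xi_0$ and $C\xi_0+Du_0=F(1/\lambda)u_0$, the $H$-terms cancel because $|\lambda|=1$, and the uniform gap $\varepsilon$ gives $\|F\|_{\infty,\T}^2\le 1-\varepsilon$. For calibration: the paper does not prove Theorem \ref{T:dichotSBRL} at all; it quotes it as Theorem 7.1 of \cite{BGtH18c}, so there is no in-paper argument to compare against, and the burden of your proposal lies entirely on the converse direction.

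That converse direction ($\|F\|_{\infty,\T}<1$ implies existence of an invertible selfadjoint solution) is where your proposal has a genuine gap, which your own ``main obstacle'' paragraph concedes. The plan to build a block-diagonal $H=\sbm{H_-&0\\0&H_+}$ by running a Stein equation, or the stable strict bounded real lemma, separately on the $\cX_+$-subsystem and on the time-reversed $\cX_-$-subsystem does not work as stated: $\|F\|_{\infty,\T}<1$ controls only the sum $D+zC_+(I-zA_+)^{-1}B_+ + zC_-(I-zA_-)^{-1}B_-$, not the stable and antistable summands individually, and Gramians of Stein equations ignore precisely the $B$, $C$, $D$ coupling that \eqref{eq:KYPdichotomous} must accommodate, so the pasted operator need not satisfy the full KYP inequality. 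It is also unclear that a solution block-diagonal with respect to the (generally non-orthogonal) dichotomous decomposition exists at all; the paper's later use of the theorem (proof of Theorem \ref{T:BRL*}) works with a full block operator $\sbm{H_-&H_0\\H_0^*&H_+}$ and only derives $H_-\prec0$ and $H/H_-\succ0$, which is consistent with the weaker inertia statement actually asserted. Finally, deferring the existence of the spectral factor of $I-F^*F$, its state-space realization, the invertibility of the candidate $H$, and the inertia count to ``the machinery of \cite{BGtH18c}'' is circular, since the statement to be proved \emph{is} Theorem 7.1 of that reference. As it stands you have established only the easy implication; the hard implication still needs an actual construction that handles the stable/antistable coupling (and yields invertibility of $H$), and the claimed correspondence of spectral subspaces is likewise unproven because it rests on the block-diagonal $H$ you did not construct.
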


\subsection{The associated \krein\ spaces and bicontractivity of $\Sigma$}

Set $\Sigma=\sbm{A&B\\C&D}$, and assume that the strict KYP inequality \eqref{eq:KYPdichotomous} is satisfied for an invertible and selfadjoint operator $H$. We define the \krein\ space $\cK^\cU$, which is equal to $\cX\oplus\cU$ with the indefinite inner product defined by the Gram operator $H\oplus I_{\cU}$. 
Likewise, introduce the \krein\ space $\cK^\cY$ which is
equal to $\cX\oplus I_{\cY}$ with the Gram operator given by $H\oplus I_{\cY}$. The strict KYP inequality then says that $\Sigma$ is a uniform contraction from the \krein\ spaces $\cK^\cU$ to the \krein\ space $\cK^\cY$,
 that is, there exists an $\varepsilon>0$ such that 
\begin{equation}\label{eq:strictcontr}
[\Sigma f, \Sigma f]_{\cK^\cY} \leq [f, f]_{\cK^\cU} - \varepsilon \|f\|_{\cX\oplus\cU}^2, \quad f\in \cX\oplus \cU. 
\end{equation}

We would like to establish that $H^{-1}$ solves the strict KYP inequality for $\Sigma^*$, as is the case when the spectrum of $A$ is in $\D$, cf., \cite{BGtH18a}. To show that $H^{-1}$ solves the strict KYP inequality for $\Sigma^*$ is equivalent to show that the \krein\ space adjoint $\Sigma^{[*]}=(H^{-1}\oplus I_{\cU})\Sigma^*(H\oplus I_{\cY})$ is a uniform contraction from $\cK^\cY$ to $\cK^\cU$, i.e., to show that there exists a $\delta>0$ such that
\[
[\Sigma^{[*]} g, \Sigma^{[*]} g]_{\cK^\cU} \leq [g, g]_{\cK^\cY} - \delta\, \|g\|_{\cX\oplus\cY}^2, \quad g\in \cX\oplus \cY. 
\] 
In other words, we need to show that $\Sigma$ is a \emph{uniform \krein\ space bicontraction} from $\cK^\cU$ to $\cK^\cY$, and for this, we will use \cite[Prop.\ 2.4.26]{AzIobook}.

\begin{theorem}\label{T:BRL*}
Let $H$ be an invertible and selfadjoint solution to the strict KYP inequality \eqref{eq:KYPdichotomous} for a dichotomous system $\Sigma$. Then $H^{-1}$ solves the strict KYP inequality for $\Sigma^*$, that is
\begin{equation}\label{eq:KYPdichotadj}
	\bbm{A&B\\C&D}\bbm{H^{-1}&0\\0&I_\cU}\bbm{A&B\\C&D}^* \prec\bbm{H^{-1}&0\\0&I_\cY}.
\end{equation}
\end{theorem}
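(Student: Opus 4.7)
The plan is essentially contained in the paper's own discussion: by exactly the same calculation that produced \eqref{eq:strictcontr} from \eqref{eq:KYPdichotomous}, the desired inequality \eqref{eq:KYPdichotadj} is equivalent to the statement that the Krein-space adjoint $\Sigma^{[*]}:\cK^\cY\to\cK^\cU$ is a uniform Krein-space contraction. Since the hypothesis already gives that $\Sigma:\cK^\cU\to\cK^\cY$ is a uniform Krein-space contraction, the whole theorem reduces to showing that $\Sigma$ is in fact a uniform Krein-space \emph{bicontraction} between $\cK^\cU$ and $\cK^\cY$, which is exactly the conclusion of \cite[Prop.\ 2.4.26]{AzIobook} once its hypothesis is verified.

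To apply that proposition, I would check its hypothesis on the matching of negative indices of source and target Krein spaces. The Gram operators of $\cK^\cU$ and $\cK^\cY$ are $H\oplus I_\cU$ and $H\oplus I_\cY$, respectively, and since $I_\cU,I_\cY\succ 0$ they contribute only to the positive parts. Consequently, both $\cK^\cU$ and $\cK^\cY$ have negative index equal to the dimension of the spectral subspace of $H$ over $(-\infty,0)$. The last assertion of Theorem \ref{T:dichotSBRL} identifies that subspace (up to Hilbert-space isomorphism inside $\cX$) with $\cX_-$, the spectral subspace of $A$ over $\E$. Hence the negative indices of $\cK^\cU$ and $\cK^\cY$ agree as cardinals, which is precisely what \cite[Prop.\ 2.4.26]{AzIobook} requires.

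The main obstacle, and the real content of the argument, is exactly this dimension-matching in the infinite-dimensional setting: a Krein-space contraction is in general \emph{not} automatically a bicontraction, and the theorem would fail without additional information linking the negative spectrum of $H$ to the structure of $\Sigma$. It is Theorem \ref{T:dichotSBRL} that supplies this link through the spectral dichotomy of $A$. Once the hypothesis of \cite[Prop.\ 2.4.26]{AzIobook} is verified, the bicontractivity of $\Sigma$ follows, and translating the resulting uniform contraction estimate for $\Sigma^{[*]}$ back into operator-inequality form via $\Sigma^{[*]}=(H^{-1}\oplus I_\cU)\Sigma^*(H\oplus I_\cY)$ and an invertible change of variable yields \eqref{eq:KYPdichotadj}.
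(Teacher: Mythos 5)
There is a genuine gap in the middle step: you have misidentified what \cite[Prop.\ 2.4.26]{AzIobook} (in combination with the surrounding results) actually requires. Equality of the negative indices of $\cK^\cU$ and $\cK^\cY$ is automatic and carries no information here: both Gram operators are of the form $H\oplus I$ with a uniformly positive second summand, so both negative indices coincide with the negative index of $H$ itself, without any appeal to Theorem \ref{T:dichotSBRL}. More importantly, in the infinite-dimensional setting of this paper such index matching is only a necessary condition, not a sufficient one, for a uniform \krein-space contraction to be a bicontraction. A counterexample: on $\cK=\cK_+[\dotplus]\cK_-$ with both components infinite-dimensional Hilbert spaces, let $T(x_++x_-)=cVx_-$ with $c>1$ and $V$ a unilateral shift on $\cK_-$ viewed as a Hilbert space. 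Then $[x,x]-[Tx,Tx]=\|x_+\|^2+(c^2-1)\|x_-\|^2\geq\min(1,c^2-1)\|x\|^2$, so $T$ is a uniform \krein-space contraction of $\cK$ into itself, while $T^{[*]}=\sbm{0&0\\0&cV^*}$ annihilates the nonzero negative vectors $\sbm{0\\x_-}$ with $x_-\in\ker V^*$ and hence is not even a contraction. So the reduction ``equal negative indices $\Rightarrow$ bicontraction'' fails exactly in the situation the theorem is about, and your proposal proves nothing beyond the (correct) observation that the theorem is equivalent to bicontractivity of $\Sigma$.

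What the criterion really demands --- and this is how the paper argues, combining \cite[Def.\ 2.4.9, Thm 2.4.17, Prop.\ 2.4.26]{AzIobook} --- is that the image $\Sigma\cK_-^\cU$ of a suitable maximal nonpositive subspace of $\cK^\cU$ be \emph{maximal} nonpositive in $\cK^\cY$. Verifying this uses the structure of the system in two places that are absent from your proposal. First, restricting the KYP inequality to the dichotomous decomposition $\cX=\cX_-\dotplus\cX_+$ one shows that $H$ is uniformly negative on $\cX_-$ (and the Schur complement $H/H_-$ is uniformly positive), so that $\cK_-^\cU:=\sbm{\cX_-\\ \zero}$ is the negative component of a fundamental decomposition of $\cK^\cU$, and similarly for $\cK^\cY$. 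Second, maximality of $\Sigma\cK_-^\cU$ is checked with \cite[Thm 1.4.5]{AzIobook} by computing that its projection onto $\cK_-^\cY$ along $\cK_+^\cY$ equals $\sbm{A_-\cX_-\\0}=\cK_-^\cY$, which uses the invertibility of $A_-=A|_{\cX_-}$ coming from $\sigma(A_-)\subset\E$. It is this surjectivity step, not index counting, through which the dichotomy of $A$ enters, and without it the argument does not go through.
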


\begin{proof}[\bf Proof] 
Let $\cX=\cX_- \dotplus \cX_+$ be the dichotomous decomposition of the state space of $\Sigma$, so that the operators $A$, $B$ and $C$ decompose as
\begin{equation}\label{eq:dichotpf}
A=\mat{A_-&0\\0&A_+},\quad B=\mat{B_-\\B_+},\quad C=\mat{C_-&C_+},   
\end{equation}
with $\sigma(A_+)$ in $\D$ and $\sigma(A_-)$ in $\E$. 
As before, let $P_+$ denote the bounded projection in $\cX$ onto $\cX_+$ along $\cX_-$, and let $\cX_\pm$ inherit the inner product of $\cX$. Then the linear operator
$$
	P:=\bbm{I-P_+\\P_+}:\cX\to\cX_-\oplus\cX_+
$$
is bounded, injective and surjective, and hence it has an inverse. Here $\cX_-\oplus\cX_+$ is now an orthogonal direct sum.

Define 
\begin{equation}\label{eq:SigmaHatDef}
	\widehat\Sigma
	=\bbm{\widehat{A} & \widehat{B} \\ \widehat{C} & D}
	:=\bbm{P&0\\0&I_\cY}\Sigma\bbm{P&0\\0&I_\cU}^{-1}:
	\bbm{\cX_-\oplus\cX_+\\\cU}\to
	\bbm{\cX_-\oplus\cX_+\\\cY},
\end{equation}
and note that $\widehat{A}$ decomposes as
$$
	\widehat A=PAP^{-1}=\mat{A_-&0\\0& A_+}:
	\cX_-\oplus\cX_+\to\cX_-\oplus\cX_+.
$$
In fact, $\widehat A=A$ if one identifies $\cX$ with $\cX_-\oplus\cX_+$ via the isomorphism $P$, but we refrain from making this identification below. Due to the orthogonality of the decomposition $\cX_-\oplus\cX_+$, the adjoint $\widehat A^*=\sbm{A_-^*&0\\0&A_+^*}$ is also diagonal, whereas $A^*$ as an operator on $\cX=\cX_-\dotplus\cX_+$ is in general not.

Next let $H$ solve the strict KYP inequality for $\Sigma$ and observe that then $\widehat H:=P^{-*}HP^{-1}$ solves the strict KYP inequality for $\widehat\Sigma$.
 Now decompose $\widehat H$ as
\[
\widehat H=\mat{H_-&H_0\\H_0^* & H_+}:
\cX_-\oplus\cX_+\to\cX_-\oplus\cX_+,
\]
and let $\varepsilon >0$ be such that $\widehat H-\widehat A^*\widehat H\widehat A\succeq\varepsilon \,I_{\cX_-\oplus\cX_+}$. Then, following the arguments on pages 56 and 57 of \cite{BGtH18c}, we obtain that $H_-\prec 0$ in $\cX_-$ and $H_+ \succ 0$ in $\cX_+$. In particular $H_-$ and $H_+$ are invertible. Let $\widehat H/H_+:=H_- - H_0 H_+^{-1}H_0^*$ be the Schur complement of $\widehat H$ with respect to $H_+$. Then, since $H_- \prec 0$ and  $H_+^{-1}\succ 0$, we have $\widehat H/H_+\prec 0$.
Moreover,
\[
T:=\mat{I_{\cX_-}&0\\H_+^{-1}H_0^*&I_{\cX_+}}:
\cX_-\oplus\cX_+\to\cX_-\oplus\cX_+ 
\] 
is invertible and we have
\begin{equation}\label{Hdiag}
\widehat H=T^* \mat{\widehat H/H_+&0\\0& H_+} T.
\end{equation} 
Now set 
\begin{equation}\label{eq:SigmaTildeDef}
\widetilde\Sigma =\bbm{\widetilde A & \widetilde B\\ \widetilde C & D}
:=\bbm{T&0\\0&I_\cY}\widehat\Sigma\bbm{T&0\\0&I_\cU}^{-1}:
	\bbm{\cX_-\oplus\cX_+\\\cU}\to
	\bbm{\cX_-\oplus\cX_+\\\cY}
\end{equation}
in order to obtain
\begin{equation}\label{eq:AtildeBlock}
\wtil{A}=T\widehat A T^{-1}=\mat{A_-&0\\ H_+^{-1}H_0^* A_- - A_+ H_+^{-1}H_0^*& A_+}:
\cX_-\oplus\cX_+\to\cX_-\oplus\cX_+.
\end{equation}
Moreover,
$$
	\widetilde H:=
	T^{-*}\widehat HT^{-1}
	=\mat{\widehat H/H_+&0\\0& H_+}:
	\cX_-\oplus\cX_+\to\cX_-\oplus\cX_+
$$
solves the strict KYP inequality for $\widetilde\Sigma$, with the state space $\cX_-\oplus\cX_+$.

We next prove that $\widetilde H^{-1}$ solves the strict KYP inequality for $\widetilde\Sigma^*$. For this, observe that $\widetilde K=\sbm{\Xscr_-\\\zero}[\dotplus]\sbm{\zero\\\Xscr_+}$ is a fundamental decomposition of the \krein\ space $\widetilde K$ formed by equipping the Hilbert space $\cX_-\oplus\cX_+$ with the indefinite inner product with Gram operator $\widetilde H$.  Indeed, we have 
$$
	\left[\bbm{x_-\\0},\bbm{0\\x_+}\right]_{\widetilde K}=
	\Ipdp{\bbm{\widehat H/H_+&0\\0&H_+}\bbm{x_-\\0}}{\bbm{0\\x_+}}_{\cX_-\oplus\cX_+}=0,
$$
and it follows from $\widehat H/H_+\prec 0$ that $\sbm{\Xscr_-\\\zero}$ is the anti-Hilbert space, while $H_+\succ 0$ implies that $\sbm{\zero\\\Xscr_+}$ is the Hilbert space in the fundamental decomposition. 

Let $\widetilde K^\cU$ be the \krein\ space obtained by equipping $(\cX_-\oplus\cX_+)\oplus\cU$ with the Gram operator $\sbm{\widetilde H&0\\0&I_\cU}$ and define $\widetilde K^\cY$ analogously. Then we get the fundamental decompositions
$$
	\widetilde K^\cU=\widetilde K^\cU_-[\dotplus]\widetilde K^\cU_+:=
	\bbm{\Xscr_-\\\zero}[\dotplus]\bbm{\Xscr_+\\\cU}
	\ \text{and}\
	\widetilde K^\cY=\widetilde K^\cY_-[\dotplus]\widetilde K^\cY_+:=
	\bbm{\Xscr_-\\\zero}[\dotplus]\bbm{\Xscr_+\\\cY},
$$
with $\sbm{\Xscr_-\\\zero}$ being the anti-Hilbert spaces (the zero vectors are from $\cU$ and $\cY$, respectively). By the discussion before the theorem, $\widetilde \Sigma$ is a contraction from $\widetilde K^\cU$ into $\widetilde K^\cY$, and according to \cite[Cor.\ 8.1.7]{GheoBook22}, $\widetilde\Sigma$ is in fact even a bicontraction because
$$
	P_-\widetilde \Sigma\big|_{\widetilde K^\cU_-}=A_-
$$
by \eqref{eq:AtildeBlock}, and this operator is invertible on $\cX_-$; here $P_-$ denotes the projection in $\widetilde K^\cY$ onto ${\widetilde K^\cY_-}$ along ${\widetilde K^\cY_+}$. 

By the previous paragraph, $\widetilde\Sigma$ is a bicontraction from $\widetilde K^\cU$ to $\widetilde K^\cY$, so that
$$
	\mat{\wtil{A}&\wtil{B}\\\wtil{C}&\wtil{D}}
	\mat{\wtil{H}^{-1}&0\\0&I_\cU}
	\mat{\wtil{A}&\wtil{B}\\\wtil{C}&\wtil{D}}^*
	\preceq
	\mat{\wtil{H}^{-1}&0\\0&I_\cY},
$$
and combining this with $\widetilde H^{-1}=TPH^{-1}(TP)^*$, \eqref{eq:SigmaHatDef} and \eqref{eq:SigmaTildeDef}, we get that $H^{-1}$ satisfies the (non-strict) KYP inequality for $\Sigma^*$. Finally, we apply a Schur coupling argument in order to prove that $H^{-1}$ in fact solves the \emph{strict} KYP inequality for $\Sigma^*$.

We have so far established that
\[
	\bbm{H^{-1}&0\\0& I_\cY} - \Sigma \bbm{H^{-1}&0\\0& I_\cU} \Sigma^* \succeq 0
\]
and we need to show that the left hand side is also invertible. For this purpose, consider the following $2 \times 2$ block operator 
\[
L:=\mat{\bbm{H^{-1}&0\\0& I_\cY} & \Sigma\\ 
	\Sigma^* & \bbm{H&0\\0& I_\cU}}. 
\]
Note that the left upper and right lower blocks are invertible, and that the Schur complements with respect to these blocks are given by 
\[
\bbm{H&0\\0& I_\cU} - \Sigma^* \bbm{H&0\\0& I_\cY} \Sigma
\quad\mbox{and}\quad 
\bbm{H^{-1}&0\\0& I_\cY} - \Sigma \bbm{H^{-1}&0\\0& I_\cU} \Sigma^*.
\]
Hence these two operators are Schur coupled. Since $H$ solves the strict KYP inequality for $\Sigma$, the first of these Schur complements is uniformly positive, and, in particular, invertible, while we have shown that the second Schur complement is positive semidefinite. However, since the operators are Schur coupled, the second Schur complement must also be invertible; see, e.g., \cite{BGKR05,BT94}, and hence it is even uniformly positive. This precisely means that $H^{-1}$ solves the strict KYP inequality for $\Sigma^*$.
\end{proof}

Note that \eqref{eq:KYPdichotomous} and \eqref{eq:KYPdichotadj} imply that $A^*HA \prec H$ and $AH^{-1}A^{*} \prec H^{-1}$, respectively. In other words, $A$ is a uniform bicontraction on the \krein\ space $\cK$ with Gram operator $H$. It is also true that uniformly bicontractive operators are dichotomous. The following result can be found by puzzling together various parts of \cite{AzIobook}, and the proof has precise references.

\begin{lemma}\label{lem:bicontractdichotAlt}
Let $A\in\cB(\cX)$ and let $H\in\cB(\cX)$ be selfadjoint and invertible, so that the Hilbert space $\cX$ becomes a \krein-space $\cK$ with Gram operator $H$. Assume that $A$ is uniformly bicontractive in $\cK$. Then $A$ is dichotomous on $\cX$, and $A$ has a unique pair $(\cX_-,\cX_+)$ of invariant subspaces, such that $\pm\cX_\pm$ are maximal positive semidefinite subspaces of $\cK$ and $\cX=\cX_-\dotplus\cX_+$. In fact, $(\cX_-,\cX_+)$ is the dichotomous pair of $A$, and $\pm\cX_\pm$ are Hilbert spaces in the inner product inherited from $\cK$.
\end{lemma}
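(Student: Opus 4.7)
The plan is to split the statement into three pieces: existence of the pair $(\cX_-,\cX_+)$ with the stated (semi-)definiteness geometry, the spectral identification of the restrictions $A_\pm:=A|_{\cX_\pm}$ (which will simultaneously yield that $A$ is dichotomous and that $(\cX_-,\cX_+)$ is its dichotomous pair), and uniqueness. The central tool is the body of results on invariant maximal definite subspaces of uniformly $J$-bicontractive operators in \cite{AzIobook}, combined with their angular-operator representation.

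For existence, I would invoke the fixed-point theorem for invariant maximal uniformly definite subspaces of a uniformly bicontractive operator on a \krein\ space (the result in Chapter~2 of \cite{AzIobook} that underlies Theorem~3.2.1 cited in Section~\ref{sec:BRL-Krein}): applied to $A$ it yields an $A$-invariant maximal uniformly positive $\cX_+$, and applied to the equally bicontractive $A^{[*]}$ it yields an $A^{[*]}$-invariant maximal uniformly positive $\cY_+$, whose $\cK$-orthogonal companion $\cX_-:=\cY_+^{[\perp]}$ is then $A$-invariant and maximal uniformly negative. Uniform definiteness makes $\pm\cX_\pm$ Hilbert spaces in the inherited inner product, and the incompatibility of signs gives $\cX_-\cap\cX_+=\zero$. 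For the direct-sum complementarity $\cX_-\dotplus\cX_+=\cX$, I would fix a fundamental decomposition of $\cK$ and use the angular-operator representation of maximal uniformly definite subspaces (Theorem~1.4.5 of \cite{AzIobook}): both $\cX_+$ and $\cX_-$ are graphs of strict contractions, and their algebraic sum fills $\cX$ because the product of the two angular operators has norm strictly less than one.

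For the spectral identification, restrict $A$ to the Hilbert space $(\cX_+,[\cdot,\cdot])$; the strict contraction inequality for $A$ on $\cK$ specializes to a strict Hilbert-space contraction inequality for $A_+$, so $\sigma(A_+)\subset\D$. On the Hilbert space $(\cX_-,-[\cdot,\cdot])$, the same inequality rearranges to the uniform expansion $\|A_-x\|_-^2\geq(1+\eta)\|x\|_-^2$, giving injectivity and closed range; surjectivity, and hence $\sigma(A_-)\subset\E$, then follows by running the symmetric argument for $A^{[*]}$ on $\cY_+$ and transferring via the identification $\cX_-=\cY_+^{[\perp]}$. Combining, $\sigma(A)=\sigma(A_-)\cup\sigma(A_+)$ is disjoint from $\T$, so $A$ is dichotomous with $(\cX_-,\cX_+)$ its dichotomous pair. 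Uniqueness of the pair stated in the lemma then follows because the dichotomous pair of a dichotomous operator is unique---it is the pair of spectral subspaces of $A$ over $\D$ and $\E$---and any candidate pair satisfying the lemma's hypotheses has, by the same spectral argument applied to it, exactly this location.

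The main obstacle I anticipate is the bounded invertibility of $A_-$: uniform bicontractivity of $A$ on $\cK$ by itself produces only a one-sided inequality on $\cX_-$, giving expansion but not yet surjectivity, so one must exploit the duality between $A$ and $A^{[*]}$ via $\cK$-orthogonal complements. Closely tied to this is the direct-sum complementarity $\cX_-\dotplus\cX_+=\cX$, which is resolved by the same angular-operator technology. Once these two pieces are in place, the remainder of the proof is a matter of reading off standard \krein\ space facts from \cite{AzIobook}.
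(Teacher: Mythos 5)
Your proposal is correct in outline but takes a genuinely different route: the paper settles the lemma almost entirely by citation, using \cite[Def.\ 2.4.23, Thm 2.4.31]{AzIobook} to get $\sigma(A)\cap\T=\emptyset$ and \cite[Thm 3.2.1]{AzIobook} for the existence, uniqueness and invariance of the maximal semidefinite pair, the decomposition $\cX=\cX_-\dotplus\cX_+$ and the localization of $\sigma(A|_{\cX_\pm})$, whereas you re-derive this machinery by hand (fixed-point theorem for existence, angular operators for complementarity, direct norm estimates for the spectra). That buys a self-contained argument with explicit constants, but it also means you owe the details, and one step does not yet close as written: the surjectivity of $A_-:=A|_{\cX_-}$. ``Running the symmetric argument for $A^{[*]}$ on $\cY_+$'' only yields $\sigma(A^{[*]}|_{\cY_+})\subset\D$, and with respect to the $\cK$-orthogonal decomposition $\cX=\cY_+\,[\dotplus]\,\cX_-$ the operator $A^{[*]}|_{\cY_+}$ is the $(1,1)$-block of $A^{[*]}$, i.e.\ it controls the \emph{compression} of $A$ to $\cY_+$, not the block $A|_{\cX_-}$; nothing is ``transferred''. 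The repair is short and in the spirit you indicate: if $w\in\cX_-$ satisfies $[Ax,w]=0$ for all $x\in\cX_-$, then $A^{[*]}w\in\cX_-^{[\perp]}=\cY_+$, so $[A^{[*]}w,A^{[*]}w]\geq0$, while uniform contractivity of $A^{[*]}$ gives $[A^{[*]}w,A^{[*]}w]\leq[w,w]-\varepsilon\|w\|^2\leq-\varepsilon\|w\|^2$, hence $w=0$; density of $A\cX_-$ in the Hilbert space $(\cX_-,-[\cdot,\cdot])$ plus the closed range you already have then give invertibility and $\sigma(A_-)\subset\E$.

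The uniqueness claim has the same soft spot: a candidate pair in the lemma is only assumed invariant, maximal \emph{semi}definite and complementary, so ``the same spectral argument'' is not available until uniform definiteness and invertibility are established for it. This can be done with the same duality: for an invariant nonnegative $\cL$, $[Ax,Ax]\geq0$ combined with $[Ax,Ax]\leq[x,x]-\varepsilon\|x\|^2$ forces $[x,x]\geq\varepsilon\|x\|^2$ on $\cL$; for an invariant maximal nonpositive $\cL$, its companion $\cL^{[\perp]}$ is $A^{[*]}$-invariant and nonnegative, hence uniformly positive, so $\cL$ is uniformly negative and the surjectivity argument above applies with $\cY_+$ replaced by $\cL^{[\perp]}$. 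Then $\sigma(A|_{\cX'_+})\subset\D$, $\sigma(A|_{\cX'_-})\subset\E$ and $\cX=\cX'_-\dotplus\cX'_+$ identify the candidate pair with the Riesz spectral subspaces, i.e.\ the dichotomous pair. With these two additions your proof is complete; alternatively, both points come for free from \cite[Thm 3.2.1]{AzIobook}, which is exactly the shortcut the paper takes.
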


\begin{proof}[\bf Proof] 
The operator $A$ is uniformly bicontractive in $\cK$ if and only if it is uniformly biexpansive in the \krein\ space $-\cK$ by \cite[Def. 2.4.23]{AzIobook}, and by \cite[Thm 2.4.31]{AzIobook} it then holds that $\sigma(A)\cap\T=\emptyset$. The invariance, existence and uniqueness of $\cX_\pm$ follows from \cite[Thm 3.2.1]{AzIobook}, and the fact that $\Xscr=\cX_-+\cX_+$ is given in (2.2) in the proof of that result. Theorem 3.2.1 in \cite{AzIobook} also states that $\cX_-$ is uniformly positive and $\cX_+$ uniformly negative in $-\cK$, and since every maximal semidefinite subspaces is closed, this implies that $\cX_+$ and $-\cX_-$ are Hilbert spaces in the inner product inherited from $\cK$. From that fact, it follows that $\cX_-\cap\cX_+=\zero$, since every vector in this intersection satisfies $\|x\|^2_{\cX_+}=0$, so that $\Xscr=\cX_- +\cX_+=\cX_-\dotplus\cX_+$. 

By the proof of \cite[Thm 3.2.1]{AzIobook}, it moreover holds that $\sigma(A_1)\subset \overline\E$ and $\sigma(A_2)\subset\overline\D$, where
\begin{equation*}\label{eq:KreinDicho}
	A=\bbm{A_1&0\\0&A_2}:\cX_-\dotplus\cX_+\to \cX_-\dotplus\cX_+.
\end{equation*}
Since $\sigma(A)=\sigma(A_1)\cup\sigma(A_2)$ does not intersect $\T$, we in fact have $\sigma(A_1)\subset \E$ and $\sigma(A_2)\subset\D$. Then $A$ is dichotomous, and by the uniqueness of dichotomous pairs, $(\cX_-,\cX_+)$ is the dichotomous pair of $A$.
\end{proof}

\section{Wiener-Hopf factorization of functions of the form identity plus a strictly contractive transfer function}

In this section we prove our main results, which are the following two theorems. 

\begin{theorem}[right canonical Wiener-Hopf factorization]\label{thm:rightWH}
Consider a $\cB(\cU)$-valued function $G(z)$, with $\cU$ a Hilbert space, of the form
\begin{equation*}
	 G(z)=I+F(z),\qquad
	F(z)=D+zC(I-z A)^{-1} B,\qquad
	\|F\|_{\infty,\T}<1.
\end{equation*}
Assume that $A$ is dichotomous and that $I+D$ is invertible. Then $A^\times :=A-B(I+D)^{-1}C$ is also dichotomous.
Let $(\cX_-,\cX_+)$ and $(\cX^\times_-,\cX^\times_+)$ be the dichotomous pairs of $A$ and $A^\times$, respectively. Then
\[
\cX=\cX_- \dot+ \cX^\times_+.
\]
Write $\Pi_r$ for the projection in $\cX$ onto $\cX^\times_+$ along $\cX_-$. Take any factorization $I+D=D_1D_2$ with $D_1$ and $D_2$ invertible. 
Then $G(z)=V_-(z)\,V_+(z)$, where
$$
\begin{aligned}
	V_{-}(z) &:= D_1+zC\left(I-zA\right)^{-1}(I-\Pi_r)BD_2^{-1}
	\qquad\text{and}\\
	V_{+}(z) &:= D_2+zD_1^{-1}C\Pi_r
	\left(I-zA\right)^{-1}B,
	\quad z\in\rho(A)^{-1}\cup \{0\},
\end{aligned}
$$
with inverses given by
$$
\begin{aligned}
	V_{-}(z)^{-1} &= D_1^{-1}-zD_1^{-1}C(I-\Pi_r)\left(I-zA^\times\right)^{-1}BD_2^{-1}D_1^{-1}
	\qquad\text{and}\\
	V_{+}(z)^{-1} &= D_2^{-1}-zD_2^{-1}D_1^{-1}C
	\left(I-zA^\times\right)^{-1}\Pi_r BD_2^{-1},
	\quad z\in\rho(A^\times)^{-1}\cup \{0\}.
\end{aligned}
$$
The functions $V_+(z)$ and $V_+(z)^{-1}$ extend analytically to functions on a neighborhood of the closed unit disc $\overline \D$, while $V_-(z)$ and $V_-(z)^{-1}$ extend analytically to functions on a neighborhood of the closed complement $\overline \E$ of $\D$.
\end{theorem}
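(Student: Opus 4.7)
The plan is to combine the \krein-space framework of Section \ref{sec:BRL-Krein} with the factorization construction of \cite{BGKOT1}. Applying Theorem \ref{T:dichotSBRL} to $\Sigma=\sbm{A&B\\C&D}$ yields an invertible selfadjoint $H\in\cB(\cX)$ satisfying the strict KYP inequality \eqref{eq:KYPdichotomous}. Viewing $\cX$ as the \krein\ space $\cK$ with Gram operator $H$, the operator $A$ is uniformly bicontractive in $\cK$: the $(1,1)$-block of \eqref{eq:KYPdichotomous} gives $A^*HA\prec H$, and Theorem \ref{T:BRL*} gives $AH^{-1}A^*\prec H^{-1}$. Hence Lemma \ref{lem:bicontractdichotAlt} produces the dichotomous pair $(\cX_-,\cX_+)$ of $A$ as $A$-invariant, uniformly definite subspaces of $\cK$ of opposite signs.

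The first substantive step is to show that $A^\times=A-B(I+D)^{-1}C$ is dichotomous. Following the calculation used at the start of the proof of Theorem \ref{T:BRL*}, the strict KYP inequality unfolds to
\begin{equation*}
\ipd{H(Ax+Bu)}{Ax+Bu}-\ipd{Hx}{x}+\|Cx+Du\|^2-\|u\|^2\leq-\varepsilon(\|x\|^2+\|u\|^2)
\end{equation*}
for some $\varepsilon>0$ and all $x\in\cX,\,u\in\cU$. The crucial observation is that substituting the feedback $u=-(I+D)^{-1}Cx$ gives $Ax+Bu=A^\times x$ and $Cx+Du=(I+D)^{-1}Cx=-u$, so the two norm-square terms cancel and the inequality reduces to $(A^\times)^*HA^\times\prec H$. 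Applying the same substitution to the strict KYP inequality for $\Sigma^*$ with Gram operator $H^{-1}$ (Theorem \ref{T:BRL*}) yields $A^\times H^{-1}(A^\times)^*\prec H^{-1}$. Thus $A^\times$ is uniformly bicontractive in $\cK$, and Lemma \ref{lem:bicontractdichotAlt} furnishes its dichotomous pair $(\cX^\times_-,\cX^\times_+)$, with $\cX^\times_+$ a maximal positive semidefinite, uniformly positive, $A^\times$-invariant subspace of $\cK$.

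Next I would establish the matching $\cX=\cX_-\dotplus\cX^\times_+$. Since $\cX_-$ is a maximal negative semidefinite uniformly definite subspace of $\cK$ and $\cX^\times_+$ is a maximal positive semidefinite uniformly definite subspace of $\cK$, each can be represented as the graph of a strict contraction relative to a fixed fundamental decomposition of $\cK$; Neumann-series inversion of the associated $2\times 2$ block operator simultaneously yields $\cX_-\cap\cX^\times_+=\zero$ and $\cX_-+\cX^\times_+=\cX$. With this decomposition in hand, write $A$ and $A^\times$ in block form relative to $\cX_-\dotplus\cX^\times_+$: $A$ is block upper-triangular by $A$-invariance of $\cX_-$, $A^\times$ is block lower-triangular by $A^\times$-invariance of $\cX^\times_+$, and the two are coupled through $A-A^\times=B(I+D)^{-1}C$. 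A direct transfer-function manipulation in the style of \cite{BGKOT1} then verifies the product formula $G(z)=V_-(z)V_+(z)$ and the stated inverses. Analyticity of $V_-$ on a neighborhood of $\overline{\E}$ is immediate once one observes that $(I-\Pi_r)B$ maps into the $A$-invariant subspace $\cX_-$, on which the spectrum of $A$ lies in $\E$; symmetric reasoning handles $V_+$ on a neighborhood of $\overline{\D}$ and the inverse formulas via the corresponding spectral property of $\cX^\times_+$ under $A^\times$. I expect the main obstacle to be the bicontractivity argument for $A^\times$ together with correctly invoking the \krein-space matching theorem for opposite-signed maximal uniformly definite subspaces; the subsequent verification of the product formulas is essentially algebraic bookkeeping.
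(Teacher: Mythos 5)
Your first two steps essentially reproduce the paper's route: the feedback substitution $u=-(I+D)^{-1}Cx$ in the strict KYP inequality (and the corresponding substitution in the inequality for $\Sigma^*$ supplied by Theorem \ref{T:BRL*}) is precisely the computation of Lemma \ref{lem:UnifBiExp}, and the dichotomy of $A^\times$ then follows from Lemma \ref{lem:bicontractdichotAlt} exactly as in the paper. For the matching $\cX=\cX_-\dotplus\cX_+^\times$ you take a genuinely different and shorter route: since Lemma \ref{lem:bicontractdichotAlt} gives that $\cX_-$ is maximal uniformly negative and $\cX_+^\times$ maximal uniformly positive, representing them as graphs of strict contractions over a fundamental decomposition and inverting $\sbm{I&R_-\\R_+^\times&I}$ by a Neumann series yields directness, surjectivity and boundedness of the projection in one stroke. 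The paper instead proves density of the sum via orthogonal companions and then obtains closedness and directness through the Banach-limit lifting of \cite{Ber62} and Lemmas 5.1--5.2 of \cite{Ran82}; your argument (the classical fact that maximal uniformly definite subspaces of opposite sign decompose a \krein\ space) is correct and avoids that machinery, so this part is a legitimate simplification rather than a defect.

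The genuine gap is in the analyticity claims. Your invariance argument works for $V_-(z)$, since $(I-\Pi_r)B$ maps into the $A$-invariant space $\cX_-$ with $\sigma(A|_{\cX_-})\subset\E$, and for $V_+(z)^{-1}$, since $\Pi_rB$ maps into the $A^\times$-invariant space $\cX_+^\times$ with $\sigma(A^\times|_{\cX_+^\times})\subset\D$. It does not, as written, cover $V_+(z)$ and $V_-(z)^{-1}$. In $V_+(z)=D_2+zD_1^{-1}C\Pi_r(I-zA)^{-1}B$ the resolvent that survives is that of the \emph{compression} $A_{22}=\Pi_rA|_{\cX_+^\times}$ of $A$ to $\cX_+^\times$ along $\cX_-$; but $\cX_+^\times$ is not $A$-invariant, so ``the spectral property of $\cX_+^\times$ under $A^\times$'' says nothing about $\sigma(A_{22})$ --- indeed $A_{22}$ and $A^\times|_{\cX_+^\times}$ differ by the compression of $B(I+D)^{-1}C$. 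The missing step is the one the paper supplies: because $\cX=\cX_-\dotplus\cX_+=\cX_-\dotplus\cX_+^\times$ with $\cX_+$ also $A$-invariant, the compression $A_{22}$ is similar to $A|_{\cX_+}$ (Lemma 5.9 of \cite{BGKR08}), whence $\sigma(A_{22})\subset\D$ and $V_+$ extends analytically to $\C\setminus\sigma(A_{22})^{-1}$, a neighborhood of $\overline\D$. Symmetrically, $V_-(z)^{-1}$ involves the compression $A_{11}^\times=(I-\Pi_r)A^\times|_{\cX_-}$ to the non-$A^\times$-invariant space $\cX_-$, and its spectrum is located in $\E$ only after comparing with $A^\times|_{\cX_-^\times}$ through the decomposition $\cX=\cX_-^\times\dotplus\cX_+^\times$. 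Without these similarity arguments the extension of $V_+$ across $\sigma(A)^{-1}\cap\D$ and of $V_-^{-1}$ across $\sigma(A^\times)^{-1}\cap\E$ is not justified; this is a fixable but real omission in your plan.
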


By symmetry, we have the following analogue for left canonical factorization, obtained simply by choosing a different projection $\Pi_\ell$.

\begin{theorem}[left canonical Wiener-Hopf factorization]\label{thm:leftWH}
Let $G(z)$, $I+D=D_1D_2$ and the dichotomous pairs be as in Theorem \ref{thm:rightWH}. Then also $\cX=\cX^\times_-\dot+ \cX_+$. Write $\Pi_\ell$ for the projection in $\cX$ onto $\cX^\times_-$ along $\cX_+$. Then $G(z)=W_+(z)\,W_-(z)$, where
$$
\begin{aligned}
	W_{+}(z) &:= D_1+zC\left(I-zA\right)^{-1}(I-\Pi_\ell)BD_2^{-1}
	\qquad\text{and}\\
	W_{-}(z) &:= D_2+zD_1^{-1}C\Pi_\ell
	\left(I-zA\right)^{-1}B,
	\quad z\in\rho(A)^{-1}\cup \{0\},
\end{aligned}
$$
with inverses given by
$$
\begin{aligned}
	W_{+}(z)^{-1} &= D_1^{-1}-zD_1^{-1}C(I-\Pi_\ell)\left(I-zA^\times\right)^{-1}BD_2^{-1}D_1^{-1}
	\qquad\text{and}\\
	W_{-}(z)^{-1} &= D_2^{-1}-zD_2^{-1}D_1^{-1}C
	\left(I-zA^\times\right)^{-1}\Pi_\ell BD_2^{-1},
	\quad z\in\rho(A^\times)^{-1}\cup \{0\}.
\end{aligned}
$$
The functions $W_+(z)$ and $W_+(z)^{-1}$ extend analytically to functions on a neighborhood of the closed unit disc $\overline \D$, while $W_-(z)$ and $W_-(z)^{-1}$ extend analytically to functions on a neighborhood of the closed complement $\overline \E$ of $\D$.
\end{theorem}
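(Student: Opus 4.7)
The plan is to mirror the proof of Theorem \ref{thm:rightWH} with the roles of the dichotomous pairs $(\cX_-,\cX_+)$ and $(\cX^\times_-,\cX^\times_+)$ interchanged. Since the proof of Theorem \ref{thm:rightWH} will already have established that $A^\times$ is dichotomous and uniformly bicontractive in the \krein\ space $\cK=(\cX,[\cdot,\cdot]_H)$ with Gram operator $H$ solving \eqref{eq:KYPdichotomous}, all the Krein-space infrastructure needed here is already in place; only a different choice of matching invariant subspaces is required.

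\textbf{Step 1: The matching $\cX=\cX^\times_-\dot+\cX_+$.} Lemma \ref{lem:bicontractdichotAlt} applied to $A^\times$ in $\cK$ shows that $\cX^\times_-$ is a closed uniformly $\cK$-negative (anti-Hilbert) maximal negative subspace, while the same lemma applied to $A$ shows that $\cX_+$ is a closed uniformly $\cK$-positive (Hilbert) maximal positive subspace. By the same Krein-space complementarity argument used in Theorem \ref{thm:rightWH} to derive $\cX=\cX_-\dot+\cX^\times_+$ (namely, that a maximal uniformly negative subspace and a maximal uniformly positive subspace are direct complements in a \krein\ space; see \cite[Ch.\ 1]{AzIobook}), one obtains the desired decomposition $\cX=\cX^\times_-\dot+\cX_+$, which defines $\Pi_\ell$.

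\textbf{Step 2: The factorization and inverse formulas.} Because $\cX_+$ is $A$-invariant and $\cX^\times_-$ is $A^\times$-invariant, $(\cX^\times_-,\cX_+)$ is a matching pair in the sense of \cite[Ch.\ I--II]{BGKOT1}, so the Bart--Gohberg--Kaashoek factorization calculus applies with $\Pi_\ell$ in place of $\Pi_r$. I would write $A$ as an upper-triangular block operator with respect to $\cX=\cX^\times_-\dot+\cX_+$ and $A^\times$ as lower-triangular, then verify $G(z)=W_+(z)W_-(z)$ by direct multiplication, exactly parallel to the argument for Theorem \ref{thm:rightWH}. Swapping the projection swaps which factor carries the upper-triangular part and which carries the lower-triangular part, which is precisely why the order of the factors flips from $V_-V_+$ to $W_+W_-$. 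The inverse formulas come from the standard realization identity that the inverse of a transfer function with invertible feedthrough is obtained by substituting $A^\times$ for $A$ and inverting the feedthrough.

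\textbf{Step 3: Analyticity and main obstacle.} The realization of $W_+(z)$ feeds $(I-\Pi_\ell)BD_2^{-1}u\in\cX_+$ into $(I-zA)^{-1}$, where $\cX_+$ is $A$-invariant with $\sigma(A|_{\cX_+})\subset\D$; hence $W_+$ extends analytically across $\overline\D$, and its inverse has a parallel realization in $A^\times$ with $\sigma(A^\times|_{\cX^\times_+})\subset\D$. The analysis of $W_-$ is the most delicate point: the naive realization has $(I-zA)^{-1}$ acting on all of $\cX$, so one must use the resolvent identity
\[
(I-zA)^{-1}-(I-zA^\times)^{-1}=z(I-zA)^{-1}B(I+D)^{-1}C(I-zA^\times)^{-1}
\]
together with the $A^\times$-invariance of $\cX^\times_-$ to rewrite $\Pi_\ell(I-zA)^{-1}B$ in terms of $(I-zA^\times|_{\cX^\times_-})^{-1}$, after which analyticity of $W_-$ on a neighborhood of $\overline\E$ follows from $\sigma(A^\times|_{\cX^\times_-})\subset\E$. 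The inverses are handled symmetrically. I expect the bookkeeping in Step 2 (keeping the block structure straight) and the $A \leftrightarrow A^\times$ rewriting in Step 3 to be the only nontrivial obstacles, but neither is conceptually deeper than what is already needed for Theorem \ref{thm:rightWH}.
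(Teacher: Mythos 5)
Your Steps 1 and 2 are essentially sound and follow the paper's architecture: the matching $\cX=\cX^\times_-\dotplus\cX_+$ is exactly what Lemma \ref{lem:matching} supplies (the paper proves it by a density argument via orthogonal companions plus the Banach-limit construction and \cite[Lemma 5.2]{Ran82}, rather than by the angular-operator complementarity of a maximal uniformly positive and a maximal uniformly negative subspace that you invoke, but your route is also legitimate), and the factorization together with the inverse formulas is precisely Lemma \ref{lem:transferformulas}(ii)--(iii) applied with $\cL=\cX_+$, $\cL^\times=\cX^\times_-$ and $\Pi=\Pi_\ell$. (Minor slip: with respect to $\cX=\cX^\times_-\dotplus\cX_+$ it is $A$ that is lower triangular and $A^\times$ that is upper triangular, cf.\ \eqref{-+timesDec'}.)

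The genuine gap is in Step 3, for the analyticity of $W_-$ (and of $W_+^{-1}$). Writing blocks with respect to $\cX=\cX^\times_-\dotplus\cX_+$ gives $W_-(z)=D_2+zD_1^{-1}C_1'(I-zA_{11}')^{-1}B_1'$, where $A_{11}'=\Pi_\ell A|_{\cX^\times_-}$ is a compression of $A$ --- not of $A^\times$ --- to $\cX^\times_-$ along $\cX_+$. Analytic continuation of $W_-$ to a neighborhood of $\overline\E$ therefore requires $\sigma(A_{11}')\subset\E$; the inclusion $\sigma(A^\times|_{\cX^\times_-})\subset\E$ that you cite governs $W_-(z)^{-1}$, not $W_-(z)$ itself (compare the scalar case $w(z)=1+czb(1-za)^{-1}$, whose pole is at $1/a$, not at $1/a^\times$). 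Your resolvent-identity rewriting cannot close this: solving that identity for $\Pi_\ell(I-zA)^{-1}B$ yields $\Pi_\ell(I-zA^\times)^{-1}B(I+D)^{-1}G(z)$, and both factors have singularities inside $\E$ (the $(1,2)$ block of $(I-zA^\times)^{-1}$ involves $(I-zA_{22}'^{\times})^{-1}$, and $G$ is only given on $\rho(A)^{-1}\cup\{0\}$). The missing ingredient, which the paper supplies, is a similarity argument: since $\cX_+$ is $A$-invariant and is a common direct complement of $\cX_-$ and $\cX^\times_-$, the compression $A_{11}'$ is similar to $A|_{\cX_-}$ (both are similar to the operator induced by $A$ on $\cX/\cX_+$; this is \cite[Lemma 5.9]{BGKR08}), whence $\sigma(A_{11}')\subset\E$, giving \eqref{SpecDec'} and \eqref{Wformulas}. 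The same argument is needed to identify the main operator $A_{22}'^{\times}$ of $W_+(z)^{-1}$, a compression of $A^\times$ to $\cX_+$ along $\cX^\times_-$, as similar to $A^\times|_{\cX^\times_+}$ with spectrum in $\D$; your phrase ``parallel realization in $A^\times$ with $\sigma(A^\times|_{\cX^\times_+})\subset\D$'' silently conflates this compression with that restriction. Only $W_+$ and $W_-^{-1}$, whose main operators are genuine restrictions to invariant spectral subspaces, are covered by your argument as written.
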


The analytic extensions will be addressed and made explicit in the proof of the main theorem below. In particular, for the functions $V_+(z)$, $V_-(z)$, $W_+(z)$ and $W_-(z)$ and their inverses formulas will be given which display the analytic extensions clearly. 

For the proof of the main theorems, we need several lemmas.

\begin{lemma}\label{lem:UnifBiExp}
Assume that the transfer function $F(z)$ of the dichotomous system \eqref{LinSys} satisfies $\|F\|_{\infty,\T}<1$ and let $H\in\cB(\cX)$ be an invertible selfadjoint solution to the KYP inequality \eqref{eq:KYPdichotomous}.  If additionally $\cY=\cU$ and $I+D$ is invertible, then the operator
\begin{equation}\label{eq:AtimesDef}
	A^\times=A-B(I+D)^{-1}C
\end{equation}
is uniformly bicontractive in the \krein\ space $\cK$ obtained by pairing $\cX$ with the Gram operator $H$. 
\end{lemma}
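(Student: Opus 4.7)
The plan is to establish the two inequalities $(A^\times)^* H A^\times \preceq H - \varepsilon I$ and $A^\times H^{-1} (A^\times)^* \preceq H^{-1} - \varepsilon' I$, which together express uniform bicontractivity of $A^\times$ in $\cK$, via a single closed-loop argument applied first to $\Sigma$ and then to $\Sigma^*$.

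For the first inequality, recall that the strict KYP inequality \eqref{eq:KYPdichotomous} is, by \eqref{eq:strictcontr}, equivalent to $\Sigma = \sbm{A&B\\C&D}$ being a uniform \krein\ space contraction from $\cK^\cU$ into $\cK^\cY$; since $\cY = \cU$ in our setting these \krein\ spaces coincide with $\cX \oplus \cU$ carrying the Gram $H \oplus I$, so I view $\Sigma$ as a self-map of $\cK^\cU$. The idea is to impose on \eqref{eq:strictcontr} the feedback constraint $u = -y$. This is well-posed precisely because $I+D$ is invertible: the equation $u = -Cx - Du$ has the unique solution $u = -(I+D)^{-1}Cx$, and the corresponding new state is $x^+ = Ax + Bu = A^\times x$. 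Substituting the constrained pair $(x,u)$ into \eqref{eq:strictcontr}, the $\cU$- and $\cY$-contributions $\langle u,u\rangle$ and $\langle y,y\rangle$ cancel (they are equal because $u = -y$), leaving $[A^\times x, A^\times x]_H \leq [x,x]_H - \varepsilon\|x\|^2$, which is exactly $(A^\times)^* H A^\times \preceq H - \varepsilon I$.

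For the second inequality I apply the identical reasoning to $\Sigma^*$. By Theorem~\ref{T:BRL*}, $H^{-1}$ solves the strict KYP inequality for $\Sigma^*$, so $\Sigma^*$ is a uniform \krein\ space contraction on $\cX \oplus \cU$ with Gram $H^{-1} \oplus I$. The feedthrough of $\Sigma^*$ is $D^*$, and $I + D^*$ is invertible because $I+D$ is; the closed-loop operator produced by the feedback $u = -y$ works out to $A^* - C^*(I+D^*)^{-1}B^* = (A^\times)^*$. The same argument therefore yields $A^\times H^{-1}(A^\times)^* \preceq H^{-1} - \varepsilon' I$, which is the second bicontractivity condition.

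The only real insight required, and hence the main obstacle, is to recognize $A^\times$ as the closed-loop main operator produced by the unity feedback $u = -y$. Once that identification is made, the algebraic cancellation that would otherwise have to be found by expanding the block KYP inequality and completing a Schur complement appears for free as the trivial identity $\langle u,u\rangle = \langle -y,-y\rangle = \langle y,y\rangle$; everything else is routine, and the invertibility hypothesis on $I+D$ enters in exactly one place, namely to make the feedback loop well-posed.
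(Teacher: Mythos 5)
Your proof is correct and is essentially the paper's argument in systems-theoretic dress: substituting the feedback $u=-(I+D)^{-1}Cx$ into \eqref{eq:strictcontr} is exactly the paper's congruence of \eqref{eq:KYPdichotomous} with $\sbm{I\\-(I+D)^{-1}C}$, with your cancellation $\langle u,u\rangle=\langle y,y\rangle$ playing the role of the term $C^*(I+D)^{-*}(I+D)^{-1}C$ dropping from both sides, and both arguments invoke Theorem \ref{T:BRL*} and the analogous closed-loop step for $\Sigma^*$ (where indeed $A^*-C^*(I+D^*)^{-1}B^*=(A^\times)^*$) to get the second inequality.
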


\begin{proof}[\bf Proof]
Assume that $I+D$ is invertible. Multiplying \eqref{eq:KYPdichotomous} from the left by $\sbm{I\\-(I+D)^{-1}C}^*$ and from the right by $\sbm{I\\-(I+D)^{-1}C}$, 
we get
{\small
\[
	\bbm{ A^\times\\ (I+D)^{-1}C}^*	
	\bbm{H&0\\0&I}
	\bbm{ A^\times\\ (I+D)^{-1}C}
	\prec
	\bbm{I_\cX\\-(I+D)^{-1}C}^*\bbm{H&0\\0&I}\bbm{I_\cX\\-(I+D)^{-1}C}.
\]	
}
This yields
\[
	( A^\times)^* H A^\times+C^*(I+D)^{-*}(I+D)^{-1}C
	\prec
	H+C^*(I+D)^{-*}(I+D)^{-1}C .
\]
In turn, this is equivalent to $( A^\times)^* H A^\times \prec H$, i.e., $A^\times$ is uniformly contractive in $\cK$. 
By Theorem \ref{T:BRL*}, $H^{-1}$ is a solution to the KYP inequality \eqref{eq:KYPdichotadj}. Multiplying \eqref{eq:KYPdichotadj} from the left by the operator $\bbm{I&-B(I+D)^{-1}}$ and from the right by $\bbm{I&-B(I+D)^{-1}}^*$, we get in the same way that $ (A^\times)^{[*]}$ is also uniformly contractive in $\cK$.
\end{proof}

Next we provide sufficient conditions for $A^\times$ to be dichotomous, and for the dichotomous pairs $(\cX_-,\cX_+)$ and $(\cX_-^\times,\cX_+^\times)$ of $A$ and $A^\times$, respectively, to \emph{match} in the sense that
\begin{equation}\label{eq:match}
	\cX=\cX_\pm\dotplus\cX_\mp^\times.
\end{equation}
The proof will use the concept of a Banach limit, which we briefly recall here for self-containment. Let $\ell^\infty$ denote the Banach space of bounded sequences $\zplus\to\C$ together with the supremum norm and let $c\subset \ell^\infty$ be the 
closed subspace consisting of the convergent sequences in $\ell^\infty$. A \emph{Banach limit} is any continuous linear functional $\phi\in(\ell^\infty)^*$ with the following four properties:
\begin{enumerate}
\item $\phi(\bx)=\lim_{n\to\infty} \bx(n)$ if $\bx\in c$,
\item $\|\phi\|=1$,
\item $\phi(S\bx)=\phi(\bx)$, where $S:\ell^\infty\to\ell^\infty$ is the left shift $(S \bx)(n)=\bx(n+1)$, $n\in\zplus$\!\!, and
\item if $\bx(n)\geq0$ for all $n\in\zplus$, then $\phi(\bx)\geq0$.
\end{enumerate}
Property (4) makes comparison possible:\ If $\bx(n)\leq \by(n)$ for all $n\in\zplus$, then $\phi(\bx)\leq\phi(\by)$. Banach limits exist, by the Hahn-Banach theorem, but there does not exist a unique Banach limit. For the sake of the proof of the next lemma, it does not matter which Banach limit we choose, so we just fix one.  

\begin{lemma}\label{lem:matching}
Assume that the transfer function $F(z)$ of the dichotomous system \eqref{LinSys} satisfies $\|F\|_{\infty,\T}<1$. If $\cY=\cU$ and $I+D$ is invertible, then $A^\times$ in \eqref{eq:AtimesDef} is dichotomous. Moreover, the dichotomous pair $(\cX_-,\cX_+)$ of $A$ matches the dichotomous pair $(\cX_-^\times,\cX_+^\times)$ of $A^\times$,i.e., equation \eqref{eq:match} holds.
\end{lemma}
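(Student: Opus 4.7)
I would derive both conclusions as consequences of the \krein-space structure provided by the preceding lemmas, with the matching reduced to a complementation statement for maximal uniformly definite subspaces.

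Dichotomy of $A^\times$ is essentially automatic by chaining the preceding results. Theorem \ref{T:dichotSBRL} furnishes an invertible selfadjoint $H$ solving \eqref{eq:KYPdichotomous}; pairing $\cX$ with $H$ yields the \krein\ space $\cK$. Lemma \ref{lem:UnifBiExp} then makes $A^\times$ uniformly bicontractive in $\cK$, and Lemma \ref{lem:bicontractdichotAlt} immediately yields that $A^\times$ is dichotomous, with its dichotomous pair $(\cX_-^\times,\cX_+^\times)$ having $\cX_+^\times$ maximal uniformly positive and $\cX_-^\times$ maximal uniformly negative in $\cK$. Applying the same lemma to $A$ itself (which is $\cK$-uniformly bicontractive by the KYP inequality \eqref{eq:KYPdichotomous} together with Theorem \ref{T:BRL*}) identifies $\cX_+$ as maximal uniformly positive and $\cX_-$ as maximal uniformly negative in the same \krein\ space.

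For the matching $\cX=\cX_-\dotplus\cX_+^\times$ (the other matching being symmetric), I would verify trivial intersection and surjective sum separately. If $x\in\cX_-\cap\cX_+^\times$ then $[x,x]\leq-\epsilon_1\|x\|^2$ (uniform negativity of $\cX_-$) and $[x,x]\geq\epsilon_2\|x\|^2$ (uniform positivity of $\cX_+^\times$), forcing $x=0$. For surjectivity of the sum, I would fix a fundamental decomposition $\cK=\cK_+[\dotplus]\cK_-$ and exploit the angular-operator representations of $\cX_+^\times$ as the graph of a strict contraction $K\colon\cK_+\to\cK_-$ and of $\cX_-$ as the graph of a strict contraction $L\colon\cK_-\to\cK_+$; since $\|KL\|<1$, the operator $\sbm{I & L \\ K & I}\colon\cK_+\oplus\cK_-\to\cK$ is invertible by a Neumann-series argument, so every $x\in\cX$ decomposes as an element of $\cX_-$ plus one of $\cX_+^\times$.

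The main obstacle is surjectivity of the sum, the genuinely infinite-dimensional point where one cannot fall back on a dimension count. The Banach-limit preamble suggests an alternative, more self-contained construction of the projection $\Pi$ onto $\cX_+^\times$ along $\cX_-$: one would set $\Pi x:=\phi\bigl((T_n x)_n\bigr)$ for a suitable sequence $T_n$ built from iterates of $A$ on $\cX_+$ and of $A^\times$ on $\cX_-^\times$ together with appropriate spectral projections, using the shift-invariance of $\phi$ to obtain idempotence with invariant range, the positivity of $\phi$ to control signs against the indefinite form $[\cdot,\cdot]$, and the uniform estimates from Lemma \ref{lem:UnifBiExp} to verify boundedness of the averaged sequences so that $\phi$ can be applied. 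Either route delivers both matchings in \eqref{eq:match}.
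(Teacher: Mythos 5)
Your proposal is correct, and for the crucial matching step it takes a genuinely different route from the paper. The dichotomy part is exactly the paper's argument: Theorem \ref{T:dichotSBRL} provides $H$, Lemma \ref{lem:UnifBiExp} makes $A^\times$ uniformly bicontractive in $\cK$, and Lemma \ref{lem:bicontractdichotAlt} gives dichotomy; the maximal uniform definiteness of $\pm\cX_\pm$ and $\pm\cX_\pm^\times$ that you then use is indeed available --- it appears in the proof of Lemma \ref{lem:bicontractdichotAlt} via \cite[Thm 3.2.1]{AzIobook}, and can also be read off from the ``Hilbert space in the inherited inner product'' formulation by the bounded inverse theorem, maximal semidefinite subspaces being closed. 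For the matching \eqref{eq:match}, the paper first shows that $\cX_++\cX_-^\times$ is dense (orthogonal companions and \cite[Cor.\ 1.8.14]{AzIobook}) and then gets directness and closedness by passing to the Banach-limit (Berberian) extension $\wtil{\cX}$ and invoking Lemmas 5.1 and 5.2 of \cite{Ran82}. You instead use the classical complementation of a maximal uniformly positive subspace by a maximal uniformly negative one: with respect to a fundamental decomposition $\cK=\cK_+[\dotplus]\cK_-$, maximal semidefiniteness makes $\cX_+^\times$ and $\cX_-$ graphs of angular operators $K\colon\cK_+\to\cK_-$ and $L\colon\cK_-\to\cK_+$ defined on all of $\cK_+$ and $\cK_-$, uniform definiteness gives $\|K\|<1$ and $\|L\|<1$, and $\sbm{I&L\\K&I}=I+\sbm{0&L\\K&0}$ is boundedly invertible since $\bignorm{\sbm{0&L\\K&0}}=\max\{\|K\|,\|L\|\}<1$; its range is $\cX_+^\times+\cX_-$ and its injectivity gives directness, so $\cX=\cX_-\dotplus\cX_+^\times$ follows at once (with the boundedness of $\Pi_r$ for free), and the other matching is symmetric. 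Your route is shorter and entirely avoids the extension space and the external closedness criterion of \cite{Ran82}; the paper's route is the more robust device when less than full uniform definiteness is at hand, since the extension space converts trivial intersection into the uniform angle needed for a closed direct sum. One caveat: the alternative sketched in your last paragraph (applying the Banach limit $\phi$ directly to $\cX$-valued sequences to construct $\Pi$) does not work as stated, because $\phi$ is a scalar functional; giving meaning to such vector-valued limits is precisely why the paper passes to $\wtil{\cX}$. Drop that alternative or mark it as heuristic; your angular-operator argument alone carries the proof.
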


\begin{proof}[\bf Proof]
Let $H\in\cB(\cX)$ be an invertible and selfadjoint operator, such that \eqref{eq:KYPdichotomous} holds. Write $\cK$ for the \krein\ space obtained by equipping $\cX$ with the indefinite inner product with Gram operator $H$. By Lemma \ref{lem:UnifBiExp}, the operator $A^\times$ is uniformly bicontractive in $\cK$. By Lemma \ref{lem:bicontractdichotAlt}, the operator $A^\times$ is dichotomous, and in the dichotomous pairs $(\cX_-,\cX_+)$ and $(\cX_-^\times,\cX_+^\times)$ of $A$ and $A^\times$, respectively, the spaces $\pm\cX_\pm$ and $\pm\cX_\pm^\times$ are all maximal positive semidefinite subspaces of $\cK$, with
\begin{equation}\label{eq:Xdichotdecomp}
	\cX=\cX_-\dotplus\cX_+=\cX_-^\times\dotplus\cX_+^\times;
\end{equation}
moreover, $\pm\cX_\pm$ and $\pm\cX_\pm^\times$ are Hilbert spaces with respect to the \krein\ space inner product of $\cK$. 
It then follows from \cite[Cor.\ 1.8.14]{AzIobook} that  $-(\cX_+)^{[\perp]}$ and $(\cX_-^\times)^{[\perp]}$ are Hilbert subspaces of $\cK$, with $[\perp]$ indicating the \krein\ space orthogonal complement. This again implies that $(\cX_+)^{[\perp]}\cap(\cX_-^\times)^{[\perp]}=\{0\}$. In order to establish that $\cX_++\cX^\times_-$ is dense in $\cX$, let now $y\perp (\cX_++\cX^\times_-)$. Set $x=H^{-1}y$, so that $Hx\perp (\cX_++\cX^\times_-)$, i.e., $x\in(\cX_+)^{[\perp]}\cap(\cX_-^\times)^{[\perp]}$; then $y=0$, and it follows that $\cX_++\cX^\times_-$ is dense. It remains only to prove that the sum $\cX=\cX_++\cX^\times_-$ is also direct and closed, because then $\cX=\cX_-\dotplus\cX^\times_+$ follows by symmetry. 

In order to obtain that the sum $\cX_++\cX^\times_-$ is closed and direct, we will apply \cite[Lemma 5.2]{Ran82}. For this purpose let $\ell^\infty(\zplus;\cX)$ denote the Banach space of $\cX$-valued bounded sequences over $\zplus$ together with the supremum norm, and, following \cite{Ber62}, set
$$
	\cN:=\set{\bx\in \ell^\infty(\zplus;\cX)\mid \langle\bx,\bx\rangle=0},
$$
where
\begin{equation}\label{eq:BlimitIP}
	\langle\bx,\by\rangle:=\phi\big((\langle \bx(n),\by(n)\rangle_\cX)_{n\in\zplus}\big),\qquad
	\bx,\by\in\ell^\infty(\zplus;\cX),
\end{equation}
is defined using our arbitrarily fixed Banach limit $\phi$. Then one can easily show that \eqref{eq:BlimitIP} defines an inner product on the quotient space $\ell^\infty(\zplus;\cX)/\cN$, whose Hilbert-space completion we denote by $\widetilde\cX$. The Hilbert space $\cX$ is embedded into $\widetilde\cX$ by identifying $x\in\cX$ with the constant sequence with entries all equal to $x$, and it is clear that the restriction of the inner product on $\widetilde\cX$ to (the embedding of) $\cX$ corresponds to the inner product of $\cX$ via
$$
	\langle\iota x,\iota y\rangle=\langle x,y\rangle_\cX,\qquad\text{where}\qquad \iota x=(x,x,x,\ldots)\in\ell^\infty(\zplus;\cX),\quad x\in\cX.
$$ 
Likewise, any subspace $\cM\subset\cX$ can be embedded into $\wtil{\cX}$, and we write $\wtil{\cM}$ for the closure of $\iota\cM$ in $\wtil{\cX}$. 

As in \cite[Section 4]{Ber62}, for an operator $K$ on $\cX$, we note that the operator
$$
	\bx+\cN\mapsto (K \bx(n))_{n\in\zplus}+\cN,\qquad \bx\in \ell^\infty(\zplus;\cX),
$$
maps $\ell^\infty(\zplus;\cX)/\cN$ boundedly into itself, with the same norm as $K$, and we denote by $\widetilde K$ its continuous extension to a linear operator on $\widetilde\cX$, which has norm $\|\wtil{K}\|_{\cB(\wtil{\cX})}=\|K\|_{\cB(\cX)}$. In particular, the Gram operator $H$ extends to an operator $\wtil{H}$ on $\wtil{\cX}$, and it is easily checked that $\wtil{H}$ is also selfadjoint and invertible, and hence the Gram operator of a \krein\ space consisting of the vectors in $\wtil{\cX}$, which we denote by $\wtil{\cK}$.   

By Lemma 5.1 in \cite{Ran82}, it follows that the closures $\wtil{\cX}_+$ and $\wtil{\cX}_-^\times$ in $\wtil{\cX}$ of $\iota\cX_+$ and $\iota\cX_-^\times$, respectively, are such that $\wtil{\cX}_+$ is a maximal positive semidefinite subspace of $\wtil{\cK}$ and $\wtil{\cX}_-^\times$ is a maximal negative semidefinite subspace of $\wtil{\cK}$. 
Recall that $\cX_+$ is unformly positive and $\cX_-$ is uniformly negative in the \krein\ space $\cK$. Taking any fundamental decomposition $\cK=\cK_+ [\dot+ ] \cK_-$, it follows that with respect to this fundamental decomposition 
\[
\cX_+={\rm Im\,} \begin{bmatrix} I \\ R_+\end{bmatrix}, {\rm \ and \ }
\cX_-={\rm Im\,} \begin{bmatrix}  R_-\\ I \end{bmatrix}.
\] 
Here $R_+$ and $R_-$, the so-called angular operators,
are strict contractions because of the uniform positivity of $\cX_+$ and uniform negativity of $\cX_-$.
Notice that 
\[
\wtil{\cX}_+ ={\rm Im\,} \begin{bmatrix} \wtil{I} \\ \wtil{R_+}\end{bmatrix} {\rm \  and\ }
\wtil{\cX}_- ={\rm Im\,} \begin{bmatrix}\wtil{R_-}\\ \wtil{I}\end{bmatrix},
\]
and that $\wtil{R_\pm}$ are strict contractions. 
It is then easily checked that the restrictions of the indefinite inner product of $\wtil{\cK}$ to $\wtil{\cX}_+$ and $-\wtil{\cX}_-$ makes these subspaces into Hilbert spaces; then $\widetilde\cX_+\cap\widetilde\cX_-^\times=\zero$, and it follows from \cite[Lemma 5.2]{Ran82} that the sum $\cX_++\cX_-^\times$ is both direct and closed. 
\end{proof}

We remark that, in the last paragraph of the preceding proof, $\wtil{\cX}_+$ is moreover invariant under $\wtil{A}$ and $\wtil{\cX}_-^\times$ is invariant under $\wtil{A}^\times$, again by Lemma 5.1 in \cite{Ran82}. 

The following lemma is a  rephrasing of parts of \cite[Theorems 2.1 and 2.8]{BGKR08} specialized to the function $G(z)$ as in our main theorems. Since the proof is essentially the same as the proof in \cite{BGKR08}, we refrain from giving the details here.

\begin{lemma}\label{lem:transferformulas}
Let 
\[
G(z)=I+D+zC(I-zA)^{-1}B,
\]
with $I+D$ invertible. Define $A^\times =A-B(I+D)^{-1}C$.
The following statements hold:
\begin{enumerate}
\item[(i)] Assume that $z\in\rho(A)^{-1}\cup\{0\}$. Then $G(z)$ has an inverse if and only if $z\in\rho(A^\times)^{-1}\cup \{0\}$, 
and in that case, for $z\in\rho(A^\times)^{-1}\cup\{0\}$ we have
\begin{equation}\label{eq:Finv}
	G(z)^{-1} =(I+ D)^{-1}-z (I+D)^{-1}C\left(I-zA^\times\right)^{-1}B(I+D)^{-1}.
\end{equation}

\item[(ii)] Assume that $\cX$ can be decomposed into a (not necessarily orthogonal) direct sum $\cX=\cL\dotplus \cL^\times$, where $A\cL\subset \cL$ and $A^\times \cL^\times\subset \cL^\times$, and let $\Pi$ denote the projection in $\cX$ onto $\cL^\times$ along $\cL$. If  $I+D=D_1D_2$ with $D_1$ and $D_2$ invertible, then $G$ factors as
$$
	G(z)=W_1(z)\,W_2(z),
$$
where
\begin{equation}\label{eq:factorsdef}
\begin{aligned}
	W_1(z) &:= D_1+zC\left(I-zA\right)^{-1}(I-\Pi)BD_2^{-1}
	\qquad\text{and}\\
	W_2(z) &:= D_2+zD_1^{-1}C\Pi
	\left(I-zA\right)^{-1}B,
	\quad z\in\rho(A)^{-1}\cup\{0\}.
\end{aligned}
\end{equation}

\item[(iii)] Under the assumption and with the notation of item (ii), the values of the  functions $W_1(z)$ and $W_2(z)$ in \eqref{eq:factorsdef} are invertible for $z\in \big(\rho(A)^{-1} \cap \rho(A^\times)^{-1}\big)\cup\{0\}$ with inverses given for $z$ in the extended domain $\rho(A^\times)^{-1}\cup\{0\}$ by
\begin{equation}\label{eq:factorsinv}
\begin{aligned}
	W_1(z)^{-1} &= D_1^{-1}-zD_1^{-1}C(I-\Pi)\left(I-zA^\times \right)^{-1}B(I+D)^{-1}
	\qquad\text{and}\\
	W_2(z)^{-1} &= D_2^{-1}-z(I+D)^{-1}C
	\left(I-zA^\times \right)^{-1}\Pi B D_2^{-1}.
\end{aligned}
\end{equation}    

\end{enumerate}
\end{lemma}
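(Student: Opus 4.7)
The plan is to verify the three claims by direct computation, with the resolvent identity
\[
(I-zA)^{-1} - (I-zA^\times)^{-1} = z(I-zA)^{-1}B(I+D)^{-1}C(I-zA^\times)^{-1},
\]
which is immediate from $A-A^\times = B(I+D)^{-1}C$, doing all of the algebraic work.

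For part (i), I would use the system pencil $M(z):=\sbm{I-zA & -zB \\ C & I+D}$. For $z\in\rho(A)^{-1}$, Schur-complementing the upper-left block expresses invertibility of $M(z)$ as joint invertibility of $I-zA$ and $G(z)$; Schur-complementing the lower-right block (using invertibility of $I+D$) expresses it instead as joint invertibility of $I+D$ and $I-zA^\times$. Hence $G(z)$ is invertible if and only if $z\in\rho(A^\times)^{-1}\cup\{0\}$. The explicit formula \eqref{eq:Finv} is then checked by multiplying $G(z)$ against the proposed right-hand side and simplifying via the resolvent identity; the terms arranged as $zC[(I-zA)^{-1}-(I-zA^\times)^{-1} - z(I-zA)^{-1}B(I+D)^{-1}C(I-zA^\times)^{-1}]B(I+D)^{-1}$ cancel exactly, leaving $I$.

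For part (ii), the invariance assumptions $A\cL\subset\cL$ and $A^\times\cL^\times\subset\cL^\times$, combined with $A-A^\times=B(I+D)^{-1}C$, force the block forms
\[
A=\bbm{A_{11} & A_{12} \\ 0 & A_{22}}, \quad A^\times=\bbm{A_{11} & 0 \\ A_{21} & A_{22}},
\]
relative to $\cX=\cL\dotplus\cL^\times$, with $\Pi=\sbm{0 & 0 \\ 0 & I}$ and $I-\Pi=\sbm{I & 0 \\ 0 & 0}$. I would then multiply $W_1(z)W_2(z)$ out, expanding $(I-zA)^{-1}$ as an upper block-triangular matrix and tracking how $I-\Pi$ and $\Pi$ select the relevant blocks of $B$ and $C$; the terms combine into $D_1D_2 + zC(I-zA)^{-1}B = G(z)$.

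For part (iii), each $W_j$ is itself of the form ``invertible feedthrough plus strictly proper transfer function'', so part (i) applies to each factor separately. The key observation is that the ``inverse-system'' operator for $W_1$, namely $A - (I-\Pi)BD_2^{-1}D_1^{-1}C = A - (I-\Pi)B(I+D)^{-1}C$, coincides with $A^\times$ by the block structure above; the analogous computation for $W_2$ produces the same operator, and the formulas \eqref{eq:factorsinv} then drop straight out of \eqref{eq:Finv}. The only real obstacle is bookkeeping across the various domains: the factors $W_j(z)$ are naturally defined on $\rho(A)^{-1}\cup\{0\}$ while their inverses live on $\rho(A^\times)^{-1}\cup\{0\}$, and one must verify at each step that the formula written down is the correct analytic extension to the stated domain, rather than a formula valid only on the intersection.
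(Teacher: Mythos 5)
Your part (i) is sound: the Schur-complement argument on the pencil $\sbm{I-zA & -zB\\ C& I+D}$ combined with the resolvent identity is exactly the standard verification (the paper itself gives no proof, deferring to \cite[Theorems 2.1 and 2.8]{BGKR08}, whose proof is of this direct-verification type). The problem is the block structure you assert afterwards. The invariance assumptions force only the zero corners: $A=\sbm{A_{11}&A_{12}\\0&A_{22}}$ and $A^\times=\sbm{A^\times_{11}&0\\A^\times_{21}&A^\times_{22}}$ with respect to $\cX=\cL\dotplus\cL^\times$. Writing $E=(I+D)^{-1}$, the relation $A-A^\times=BEC$ gives $A_{11}-A^\times_{11}=B_1EC_1$, $A_{22}-A^\times_{22}=B_2EC_2$, $A_{12}=B_1EC_2$ and $A^\times_{21}=-B_2EC_1$; the diagonal blocks of $A$ and $A^\times$ do \emph{not} coincide in general, contrary to your claim. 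For part (ii) this slip is harmless, since the only identity your product computation actually needs is $\Pi A-A\Pi+(I-\Pi)BEC\Pi=0$, i.e.\ $A_{12}=B_1EC_2$, which is correct.

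In part (iii) the slip becomes a genuine gap. The associate operator of $W_1$ is $A-(I-\Pi)BEC=\sbm{A^\times_{11}&0\\0&A_{22}}$ and that of $W_2$ is $A-BEC\Pi=\sbm{A_{11}&0\\0&A^\times_{22}}$; these are two \emph{different} block-diagonal operators and neither equals $A^\times$, so \eqref{eq:factorsinv} does not ``drop straight out'' of \eqref{eq:Finv}. You still need the intertwining identities coming from the triangular structure of $A^\times$, e.g.\ $C\bigl(I-z(A-(I-\Pi)BEC)\bigr)^{-1}(I-\Pi)=C(I-\Pi)(I-zA^\times)^{-1}$ and $C\Pi\bigl(I-z(A-BEC\Pi)\bigr)^{-1}B=C(I-zA^\times)^{-1}\Pi B$, which is precisely why the projections sit outside the resolvent of $A^\times$ in \eqref{eq:factorsinv}. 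Moreover, your invertibility claim on $\rho(A)^{-1}\cap\rho(A^\times)^{-1}$ would, via part (i) applied to each factor, require invertibility of $I-zA^\times_{11}$ and $I-zA_{22}$ (respectively $I-zA_{11}$ and $I-zA^\times_{22}$); in infinite dimensions invertibility of the triangular operators $I-zA$ and $I-zA^\times$ does not imply invertibility of their diagonal blocks (consider the unitary $\sbm{S&I-SS^*\\0&S^*}$ with $S$ the unilateral shift), so this step is unsupported as written. Both gaps are repaired at once by the route of \cite{BGKR08}: take \eqref{eq:factorsinv} as candidate inverses and verify $W_j(z)^{-1}W_j(z)=W_j(z)W_j(z)^{-1}=I$ directly on $\bigl(\rho(A)^{-1}\cap\rho(A^\times)^{-1}\bigr)\cup\{0\}$ by the same kind of computation as in (ii), using only the invariance relations; this proves the invertibility on the intersection and simultaneously exhibits the analytic extension on $\rho(A^\times)^{-1}\cup\{0\}$.
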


We now turn to the proof of the main theorems. In this proof 
the analytic extensions will be addressed and made explicit. In particular, for the functions $V_+(z)$ and $V_-(z)$ and their inverses we have formulas
\eqref{V+form2}, \eqref{V+invform2} and \eqref{V-V-invforms2}, below, where the operators involved are obtained from the block operator formulas of $A$, $A^\times$, $B$ and $C$ with respect to $\cX=\cX_-\dot+\cX^\times_+$ given in \eqref{-+timesDec}, with the analytic extension claims following from \eqref{SpecDec}. Similarly, the claims regarding $W_+(z)$ and $W_-(z)$ and their inverses follow from \eqref{Wformulas}, \eqref{-+timesDec'} and \eqref{SpecDec'}, below.

\begin{proof}[\bf Proof of Theorems \ref{thm:rightWH} and \ref{thm:leftWH}.] 
By assumption, $A$ is dichotomous, and $A^\times$ is dichotomous by Lemma \ref{lem:matching}. The dichotomous pair  $(\cX_-,\cX_+)$ of $A$ provides two invariant (even reducing) subspaces of $A$, and likewise the dichotomous pair $(\cX_-^\times,\cX_+^\times)$ of $A^\times$ gives two invariant subspaces of $A^\times$. From Lemma \ref{lem:matching} we further get the decompositions $\cX=\cX_+\dotplus\cX^\times_-$ and $\cX=\cX_-\dotplus\cX^\times_+$ of $\cX$. In particular, the projections $\Pi_r$ and $\Pi_\ell$ are well-defined.

Applying item (ii) of Lemma \ref{lem:transferformulas} to the decomposition $\cX=\cX_+\dotplus\cX^\times_-$, provides the factorization $G(z)=W_+(z)W_-(z)$ of Theorem \ref{thm:leftWH}, while applying the lemma to $\cX=\cX_-\dotplus\cX^\times_+$ yields the factorization $G(z)=V_-(z)V_+(z)$ of Theorem \ref{thm:rightWH}. In both cases, the formulas for the inverses of the functions in the factorization follow directly from item (iii) of Lemma \ref{lem:transferformulas}.

It remains to prove that the functions $V_+(z)$, $V_+(z)^{-1}$, $W_+(z)$ and $W_+(z)^{-1}$ extend analytically to a neighborhood of $\D$, while $V_-(z)$, $V_-(z)^{-1}$, $W_-(z)$ and $W_-(z)^{-1}$ extend analytically to a neighborhood of $\E$. To see that this is the case for the factorization in Theorem \ref{thm:rightWH} we consider the operators $A$, $A^\times$, $B$ and $C$ as block operator with respect to the decomposition $\cX=\cX_-\dot+\cX^\times_+$ and note that they take the form 
\begin{equation}\label{-+timesDec}
A = \bbm{A_{11}&A_{12} \\ 0&A_{22}},\ \
A^\times = \bbm{A_{11}^\times&0\\A_{21}^\times&A_{22}^\times},\ \
C=\bbm{C_1&C_2},\ \
B=\bbm{B_1\\B_2}.  
\end{equation}
By the properties of the dichotomous pairs $A$ and $A^\times$ it follows that $\sigma(A_{11})\subset\E$ and $\sigma(A_{22}^\times)\subset\D$. Since we also have the decomposition $\cX=\cX_- \dot+ \cX_+$ and $\cX_+$ is also an invariant subspace of $A$, it follows from Lemma 5.9 in \cite{BGKR08} that $A_{22}$ and $A|_{\cX_+}$ are similar, and hence $\sigma(A_{22})=\sigma(A|_{\cX_+})\subset \D$. By an analogous argument it follows that $\sigma(A_{11}^\times)\subset \E$. Hence, we have
\begin{equation}\label{SpecDec}
\sigma(A_{11})\subset \E,\quad \sigma(A_{22})\subset \D,\quad \sigma(A_{11}^\times)\subset \E,\quad \sigma(A_{22}^\times)\subset \D. 
\end{equation}
Since $\sigma(A_{11})\cap \sigma(A_{22})=\emptyset$, we have that $\sigma(A)=\sigma(A_{11})\cup \sigma(A_{22})$, and likewise $\sigma(A^\times)=\sigma(A_{11}^\times)\cup \sigma(A_{22}^\times)$, again by Lemma 5.9 in \cite{BGKR08}. Next observe that with respect to the decomposition $\cX=\cX_-\dot+\cX^\times_+$, we have $\Pi_r=\sbm{0&0\\0&I}$ and $I-\Pi_r=\sbm{I&0\\0&0}$. It then follows (see also \cite[Theorem 2.8]{BGKR08}) that
\begin{align}
	V_+(z) &= D_2+zD_1^{-1}\bbm{C_1&C_2}\bbm{0&0\\0&I}
	\left( I-z\bbm{A_{11}&A_{12} \\ 0&A_{22}} \right)^{-1}
	\bbm{B_1\\B_2} \notag \\
	&= D_2+zD_1^{-1}C_2(I-zA_{22})^{-1} B_2,\label{V+form2}
\end{align}
from which it becomes clear that $V_+(z)$ can be extended analytically via \eqref{V+form2} to $\rho(A)^{-1}\cup\rho(A_{22})^{-1}\cup\{0\}=\rho(A_{22})^{-1}\cup\{0\}=\C\setminus\sigma(A_{22})^{-1}$, which is an open neighborhood of $\overline{\D}$.  Using item (i) of Lemma \ref{lem:transferformulas} shows that
\begin{equation}\label{V+invform2}
	V_+(z)^{-1}=
	D_2^{-1}-z(I+D)^{-1}C_2(I-z A_{22}^\times)^{-1}B_2D_2^{-1}
\end{equation}
which extends $V_+(z)^{-1}$ analytically to the open neighborhood $\C\setminus \sigma(A_{22}^\times)^{-1}$ of $\overline{\D}$. Similarly, it follows that $V_-(z)$ and $V_-(z)^{-1}$ can be extended analytically to open neighborhoods of $\overline{\E}$ via the formulas
\begin{equation}\label{V-V-invforms2}
\begin{aligned}
V_-(z) &= D_1+zC_1(I-z A_{11})^{-1} B_1D_2^{-1}, \quad z\in \C\setminus \sigma(A_{11})^{-1},\\
V_-(z)^{-1} &= D_1^{-1}-zD_1^{-1} C_1(I-z A_{11}^\times)^{-1} B_1(I+D)^{-1}, \quad z\in \C\setminus \sigma(A_{11}^\times)^{-1}.
\end{aligned}
\end{equation}

The analytic extensions of $W_+(z)$, $W_+(z)^{-1}$, $W_-(z)$ and $W_-(z)^{-1}$ follow in a similar way. For completeness, we add the formulas of the analytic extensions. For this, note that with respect to the decomposition $\cX=\cX_-^\times \dot+ \cX_+$ the operators $A$, $A^\times$, $B$ and $C$ take the form
\begin{equation}\label{-+timesDec'}
A = \bbm{A_{11}'&0 \\ A_{21}'&A_{22}'},\ \
A^\times = \bbm{A_{11}'^\times&A_{12}'^\times\\0&A_{22}'^\times},\ \
C=\bbm{C_1'&C_2'},\ \
B=\bbm{B_1'\\B_2'}  
\end{equation}
with, using similar arguments as above, 
\begin{equation}\label{SpecDec'}
\sigma(A_{11}')\subset \E,\quad \sigma(A_{22}')\subset \D,\quad \sigma(A_{11}'^\times)\subset \E,\quad \sigma(A_{22}'^\times)\subset \D. 
\end{equation}
The formulas for $W_+(z)$, $W_+(z)^{-1}$, $W_-(z)$ and $W_-(z)^{-1}$ then turn out to be
\begin{equation}\label{Wformulas}
\begin{aligned}
W_+(z) &= D_1+zC_2'(I-z A_{22}')^{-1} B_2'D_2^{-1},\quad z\in \C\setminus \sigma(A_{22}')^{-1},\\
W_+(z)^{-1} &= D_1^{-1}-zD_1^{-1} C_2'(I-z A_{22}'^\times)^{-1} B_2'(I+D)^{-1},\quad z\in \C\setminus \sigma(A_{22}'^\times)^{-1},\\
W_-(z) &= D_2+zD_1^{-1}C_1'(I-zA_{11}')^{-1} B_1',\quad z\in \C\setminus \sigma(A_{11}')^{-1},\\
W_-(z)^{-1} &= D_2^{-1}-z(I+D)^{-1}C_1'(I-z A_{11}'^\times)^{-1}B_1'D_2^{-1},\quad z\in \C\setminus \sigma(A_{11}'^\times)^{-1}.
\end{aligned}
\end{equation}
This completes the proof. 
\end{proof}

\section*{Declarations}

\subsection*{Acknowledgements and funding}

We thank the anonymous referee for suggesting several improvements of the presentation.

This work is based on research supported in part by the National Research Foundation of South Africa (NRF) and the DSI-NRF Centre of Excellence in Mathematical and Statistical Sciences (CoE-MaSS). Any opinion, finding and conclusion or recommendation expressed in this material is that of the authors and the NRF and CoE-MaSS do not accept any liability in this regard. Part of the research was conducted during a visit of the second author to North-West University in March and April of 2024, supported by a scholarship of the Magnus Ehrnrooth Foundation.

\subsection*{Competing interests}

The third author is a member of the Editorial Board of Integral Equations and Operator Theory.

\subsection*{Availability of data and material} Data sharing is not applicable to this article as no datasets were generated or analysed during the current study.

\subsection*{Code availability} Not applicable.

\end{document}